\newcommand{\sredm}[1]{\ifmmode\text{\xout{\ensuremath{\displaystyle \textcolor{red}{#1}}}}\else\sout{\textcolor{red}{#1}}\fi}
\numberwithin{equation}{section}
\newtheorem{theorem}{Theorem}[section]
\newtheorem{lemma}{Lemma}[section]
\newtheorem{assumption}{Assumption}[section]
\newtheorem{proposition}{Proposition}[section]
\newtheorem{example}{Example}[section]
\newtheorem{definition}{Definition}[section]
\newtheorem{remark}{Remark}[section]
\newcommand{\noi}{\noindent}
\newcommand{\E}{\mathbb{E}}
\newcommand{\R}{\mathbb{R}}
\newcommand{\N}{\mathbb{N}}
\newcommand{\la}{\lambda}
\newcommand{\sig}{\sigma}
\newcommand{\eps}{\varepsilon}
\newcommand{\ph}{\varphi}
\newcommand{\al}{\alpha}
\newcommand{\gam}{\gamma}
\newcommand{\del}{\delta}
\newcommand{\Q}{{\mathbb Q}}
\newcommand{\PP}{{\mathbb P}}
\newcommand{\calC}{{\mathcal C}}
\newcommand{\calD}{{\mathcal D}}
\newcommand{\calF}{{\mathcal F}}
\newcommand{\calL}{{\mathcal L}}
\newcommand{\calQ}{{\mathcal Q}}
\newcommand{\calZ}{{\mathcal Z}}
\newcommand{\skp}{\vspace{\baselineskip}}
\newcommand\iy{\infty}
\newcommand{\Leps}{ \la^\eps}
\DeclareMathOperator*{\argmin}{arg\,min}
\newcommand{\one}{\mathbbm{1}}
\title{Optimal ergodic harvesting under ambiguity}
\author[A. Cohen]{Asaf Cohen }
\address{Department of Mathematics\\
University of Michigan\\
Ann Arbor, MI 48109\\
United States
}
\email{shloshim@gmail.com }
\author[A. Hening]{Alexandru Hening }
\address{Department of Mathematics\\
Tufts University\\
Bromfield-Pearson Hall\\
503 Boston Avenue\\
Medford, MA 02155\\
United States
}
\email{alexandru.hening@tufts.edu}
\author[C. Sun]{Chuhao Sun}
\address{Department of Mathematics\\
University of Michigan\\
Ann Arbor, MI 48109\\
United States
}
\email{chuhaos@umich.edu}
\thanks{A. Cohen acknowledges the financial support of Research supported by the National Science Foundation (DMS-2006305). A. Hening was partially supported via the National Science Foundation grant DMS-1853463.}
\date{\today}
\begin{document}

\maketitle

\begin{abstract}
We consider an ergodic harvesting problem with model ambiguity that arises from biology. To account for the ambiguity, the problem is constructed as a stochastic game with two players: the decision-maker (DM) chooses the `best' harvesting policy and an adverse player chooses the `worst' probability measure. The main result is establishing an optimal strategy (also referred to as a control) of the DM and showing that it is a threshold policy. The optimal threshold and the optimal payoff are obtained by solving a free-boundary problem emerging from the Hamilton--Jacobi--Bellman (HJB) equation. As part of the proof, we fix a gap that appeared in the HJB analysis of  [Alvarez and Hening, {\em Stochastic Process. Appl.}, 2019, \cite{alv-hen2019
}], a paper that analyzed the risk-neutral version of the ergodic harvesting problem. Finally, we study the dependence of the optimal threshold and the optimal payoff on the ambiguity parameter and show that if the ambiguity goes to 0, the problem converges, to the risk-neutral problem.

\skp

\skp

\noi{\bf AMS Classification:} Primary:
93E20,  
91A15,  
92D25;  
secondary:	49J15,  
60J70, 
35R35.  

\noi{\bf Keywords:} Ergodic control, singular control, model uncertainty, stochastic harvesting, optimal harvesting, stochastic games.
\end{abstract}

%
%
%
%
%
%
%

\section{Introduction}\label{sec:1}

%
%
%
%
%

\subsection{The stochastic model and the main results}\label{sec:11}
We consider the ergodic harvesting problem of a population that lives in a random environment, when there is uncertainty about the underlying model. For this, we assume that there is a {\it reference filtered probability space} $(\Omega,\calF, \{\calF_t\}_t,\PP)$ supporting a Wiener process $(W_t)_{t\in\R_+}$, such that the dynamics of the harvested population satisfies
\begin{align}\notag
X^Z_t=x+\int_0^tX^Z_s\mu(X^Z_s)ds+\int_0^t\sigma(X^Z_s) dW_s- Z_t,\qquad t\in\R_+,
\end{align}
where $(Z_t)_{t\in\R_+}$ is a singular control (nondecreasing and nonnegative) under which $  X^Z_t\ge 0$.
To account for the uncertainty, the decision maker (DM) considers a large set of measures, which are equivalent to $\PP$.
The DM then incorporates these measures into the payoff she aims to maximize as follows
\begin{align}\notag
\inf_{\Q}\E^{\Q}\left[\liminf_{T\to\iy} \frac{1}{T}\left\{\E^{ \Q}[Z_T]+\frac{1}{\eps}D^{\text{KL}}_T( \Q\|\PP)\right\}\right].
\end{align}
The infimum, which represents an {\it adverse player}, is taken over the set of the equivalent measures under consideration; $\eps>0$ is a parameter that measures the level of ambiguity the DM is facing; and the Kullback--Leibler divergence $D^{\text{KL}}_T(\Q\|\PP)$ measures how much the measure $\Q$ deviates from the reference measure $\PP$.

\vspace{10pt}\noi
Using differential equations techniques and probabilistic methods we show that for any level of ambiguity $\eps$, there is an optimal control for the DM. This control is of a threshold form, in the sense that there is a level $\beta^\eps>0$, such that the control uses minimal amount of harvesting in order to maintain the dynamics in the interval $[0,\beta^\eps]$. 
These properties are summarized in the main theorem of the paper, Theorem \ref{thm:main}. The proof relies on finding lower and upper bounds for the optimal payoff. The second main contribution of the paper is fixing a gap that appears in the upper bound part considered in 
another related ergodic harvesting model (without ambiguity), \cite{alv-hen2019}. Our upper bound result, Proposition \ref{prop:upper}, is valid with or without ambiguity. Furthermore, we prove that the optimal control's threshold-levels and the optimal payoffs are continuous and decreasing with respect to the ambiguity parameter and find their limiting behavior as the ambiguity paprmeter go to the extremes $\eps\to\iy, 0+$. Finally, when $\eps\to0+$ we obtain convergence to the risk-neutral problem studied by \cite{alv-hen2019}.

\vspace{10pt}\noi
This is one of the first models that incorporates ambiguity in ergodic singular control problems as well as in harvesting models. In the setting of ergodic control there are some results for controls that are not singular \cite{BCP17}. The ambiguity models that appear in the harvesting literature are extremely simple, and mostly look at linear stochastic differential equations models \cite{VX10}.

\subsection{Review of the literature.}\label{sec:12}

In (stochastic) ergodic control problems the goal of the DM is to optimize a time-averaged criterion over an infinite horizon. This theory was first developed for discrete-time and discrete-space Markov chains, see the survey \cite{ara1993}. The ideas and intuition carried out to continuous-time problems, see \cite{ABbook}, where due to the stability of the solution, it became popular in the analysis of stochastic networks; see e.g., \cite{kus2001} and the references therein. More recently, Alvarez and Hening \cite{alv-hen2019} took advantage of the stability properties of the state dynamics to study sustainable harvesting. We discuss this model as well as other biological models in the sequel.

\vspace{10pt}\noi
Singular control problems have been studied in various fields such as biology, queueing systems, mathematical finance, manufacturing systems, etc. 
The Hamilton--Jacobi--Bellman (HJB) equations associated with these models are often reduced to free-boundary condition problems with Neumann boundary conditions. 
Menaldi et.~al \cite{men-rob-tak1992} characterize the value of a singular control problem with ergodic cost (and constant diffusion coefficient) via the HJB, in case the latter has a smooth solution. The smoothness of the HJB is not obvious in general, in which case, viscosity solutions are considered, see e.g., \cite{Atar-Budh-Will-07}.
Buhiraja and Ross used probabilistic methods and time-transformations techniques to establish the existence of an optimal singular control in \cite{BR2006}. Recently, Cohen \cite{CoTime} showed that the time-transformations are embedded within the weak-M1 topology.

\vspace{10pt}\noi
One of the fundamental problems of conservation biology is finding the optimal ways of harvesting species which are influenced by stochastic environmental fluctuations. If one overharvests, this can lead to extinctions while if one underharvests this leads to an economic loss. There has been significant work on harvesting problems when the payoff function involves the discounted gain
\[
\E \int_0^\infty e^{-\varrho s} dZ_s,
\]
where $\varrho>0$ is the discount rate. Multiple studies have shown that the optimal control is of \textit{threshold} or \textit{bang-bang} type. These types of singular stochastic control problems have been investigated in \cite{LO97, AS98, A00, A01} in the single species case, in \cite{SSZ11} when there is switching and in \cite{LO01, ALO16, HT19, HT20} in the multispecies setting.
In \cite{HNUK19} the authors considered the framework of singular ergodic control for the first time for harvesting problems. This is a very natural setting because it implies that it is never optimal to harvest a species to extinction, as it would yield a zero asymptotic yield. This is therefore a realistic setting, if one cares about the preservation of natural species. One other problem with discounted models is that it is very hard to estimate the discount rate $\varrho$; the ergodic framework needs fewer parameters and is superior in this respect. One limitation of \cite{HNUK19} is that the harvesting rate is assumed to be bounded, i.e., the controls are not singular. This limitation was removed in \cite{alv-hen2019}.
However, as we explain in Remark
 \ref{rem:AH} 
below, there is a gap that is present in the proof of \cite{alv-hen2019}.

\vspace{10pt}\noi
As mentioned earlier, the ergodic harvesting problem without ambiguity was studied in \cite{alv-hen2019}. It was also studied in \cite{lia-zer2020}, where a running non-singular payoff is incorporated.
In these models it is assumed that the DM is certain about the evolution of the system, which, moreover does not change in time.  Such an assumption is not realistic, and we consider a robust analysis. When one is interested in the conservation and harvesting of a species there are certain obstacles which have to be overcome. One is the complexity of the biology, which has to be simplified in order to model the dynamics mathematically. A second difficulty is due to a number of uncertainties: the structure of environmental fluctuations and the fact that one never has a complete knowledge of the various population sizes. These uncertainties make it difficult to associate probabilities with certain events involving the population. This is related to an economic framework due to Knight, where there is incomplete or insufficient information in order to assign probabilities to events. We bypass these problems by adding the ambiguity, also called \textit{Knightian uncertainty}, to our model. The introduction of the ambiguity makes it possible to associate probabilities to events by looking at a set of multiple possible measures that control the population dynamics. Our robust framework will be helpful when one is interested in the conservation of a species because it allows us to explore the least favorable outcomes by looking at the infimum over all the possible measures (or \textit{priors}) of the payoff.
For further research that involves Knightian uncertainty we refer to  \cite{maenhout2004robust, Hansen2006, han-sar, bay-zha, neu-nut2018} and in the context of queueing systems to \cite{Shanti,blanchet2014robust,MR3544795, Cohen2019a, Cohen2019b, Cohen2020}. 

\subsection{Challenges and proof techniques}\label{sec:13}
The structure of the Kullback--Leibler divergence leads to a linear-quadratic (standard) control problem from the side of the adverse player; that is, using Girsanov's theorem, the measure $\Q$ can be replaced by a process $(\psi_t)_{t\in\R_+}$ and the divergence-penalization term is replaced by an integral over the square of $\psi_t$. This representation makes it possible to describe the unharvested population process by the nonlinear operator
\begin{align*}
\calL^\eps f(x)&:=\inf_{p\in\R}\Big\{\frac{1}{2}\sigma^2(x)f''(x)+(x\mu(x)+\sigma(x)p)f'(x)+\frac{1}{2\eps}p^2
\Big\}
\\
&\;=
\frac{1}{2}\sigma^2(x)f''(x)+x\mu(x)f'(x)-\frac{\eps}2\sigma^2(x)(f'(x))^2.
\end{align*}
The HJB associated with this model is given by \begin{align}\label{HJB1}
\max\left\{\calL^\eps v(x)-\ell, v'(x)- 1\right\}=0,\qquad x\in(0,\iy),
\end{align}
where, upon sufficient smoothness of the solution, $\ell\ge 0$ is the optimal function, and $f$ is referred to as the {\it potential function}. Our first challenge is to show that this equation admits a $\calC^2$ solution. In Proposition \ref{prop:HJB} we establish a stronger result and show that for any $\eps\ge 0$, there is $\beta^\eps>0$ such that $\max\left\{\calL^\eps v(x)-\ell, v'(x)- 1\right\}=\calL^\eps v(x)-\ell=0$ on $[0,\beta^\eps]$ and $\max\left\{\calL^\eps v(x)-\ell, v'(x)- 1\right\}= v'(x)- 1=0$ on $(\beta^\eps,\iy)$. This is translated to the stochastic model as a threshold-control with level $\beta^\eps$. The main difficulties here stem from the nonlinear structure of the operator $\calL^\eps$. To tackle these issues we use the {\it shooting method}, a tool for solving boundary value problems using initial value problems (see \cite[Section 7.3]{Stoer1980}).
We take it one step forward in our free-boundary setup. The analysis requires a sequence of preliminary results (Lemmas \ref{lemma:deriv}--\ref{lemma:unibou}) that evolve around an ODE that is derived from the HJB and our educated guess that the optimal policy is of threshold-type.

\vspace{10pt}\noi
Given a smooth solution $v^\eps$ to \eqref{HJB1}, with associated parameters $\beta^\eps, \ell^\eps$, we show in Proposition \ref{prop:lower} that the threshold-control with level $\beta^\eps$ attains the payoff $\ell^\eps$. This establishes a lower bound. The next step is to show that by using any other control, the DM cannot attain more than $\ell^\eps$. This in turn shows that $\ell^\eps$ is an upper bound for the optimal payoff. We accomplish this in Proposition \ref{prop:upper}. In the next few paragraphs, we detail the difficulties in establishing this bound and the solution we propose.

\vspace{10pt}\noi
In the proof of the upper bound, we consider an arbitrary admissible control $Z$ and fix the candidate for the optimal control for the adverse player. Then, applying It\^o's lemma to $v^\eps(X^Z_t)$ and using the properties that $(v^\eps)'(x)\ge 1$ and $\calL^\eps v^\eps(x)\le \ell^\eps$, one obtains that for any $Z$ admissible,
\begin{equation}\label{eq:bound_intro}
\frac1T
\left(\E^{ \Q}[Z_T]+\frac{1}{\eps}D^{\text{KL}}_T( \Q\|\PP)\right)
 \leq
 \frac1Tv^\eps(x) - \frac1T\E^{\Q}[v^\eps(X^Z_T)]+ \ell^\eps .
\end{equation}
Clearly, the first term vanishes as $T\to\iy$. The proof of \cite{alv-hen2019} assumes that $v^\eps$ is bounded below, hence deducing that the second term also vanishes, and the proof is complete. However, as we show in Remark \ref{rem:AH} the function $v^\eps$ is unbounded below in the Verhulst--Pearl diffusion case given in \cite[Section 4.1]{alv-hen2019}, which is the most celebrated example in population dynamics. 
The explosion of the potential function at $x=0+$ stems from the fact that (in a consistent way with the population dynamics literature) the diffusion term vanishes as $x\to 0+$. Moreover, recall that  our arbitrary control is singular, hence it can push the process $X^Z$ very close to zero instantly, which leads to exploding values of $v^\eps(X^Z_T)$. To bypass this issue, one may be tempted to truncate the potential function $v^\eps$ (or its derivatives). However, this leads to a non-negligible violation of the HJB equation (in the sense that as the truncation level goes to infinity, the violation of HJB does not go to zero). We take advantage of the preliminary results established for the existence of a smooth solution to the HJB via the shooting method. Specifically, we consider a truncated version of a {\it perturbed} version of $v^\eps$ by considering a sequence of solutions to ODEs that are associated with threshold-contros whose threshold-levels converge from below to the candidate level $\beta^\eps$. For this sequence, the violation of the HJB vanishes as the threshold converges to $\beta^\eps$. The proof ends by taking first $\liminf_{T\to\iy}$ for each function in the sequence, and then the limit through the sequence of functions.
%

\subsection{Summary and main contributions}\label{sec:14}
In summary, our main contributions are as follows

\begin{itemize}

\item  We provide and solve an ergodic and singular control problem with ambiguity that arises naturally in the harvesting literature. This problem is formulated as a game between a DM and an adverse player.
\item We solve a relevant free-boundary problem and use it to characterize the optimal policy for the DM, which has a natural and simple form. 
\item We correct a gap that appeared in the upper bound argument of 
a previous harvesting paper \cite{alv-hen2019} that looked at the ergodic risk-neutral setting. We establish the upper bound for both the risk-neutral and the ambiguity case.
\item We analyze the dependency of the optimal payoff and optimal policy for the DM on the parameters of the problem.
\end{itemize}

\subsection{Organization}\label{sec:15} The rest of the paper is organized as follows. In Section \ref{sec:2} we set up the stochastic model, provide the underlying assumptions and Theorem \ref{thm:main}, which is the main result of the paper. The proof of the throrem relies on the four propositions given in Section \ref{sec:3}.
Section \ref{sec:4} is devoted to the proofs of the previously mentioned propositions. It includes some preliminary ODE results which are summarized in several lemmas. Finally, Section \ref{sec:6} provides comparative statics with respect to the ambiguity parameter $\eps$.

\subsection{Notation}\label{sec:16}
We use the following notation. For $a, b \in \R$, we define $a\wedge b: = \min\{a, b\}$ and $a \vee b := \max\{a, b\}$. We use $\R_+$ to denote $[0,\infty)$. We denote by $\calC^1$ or $\calC^2$ the sets of functions with continuous first derivatives or continuous second derivatives. By RCLL we mean right-continuous with finite left limits. For any Borel set $A$, $\one_A$ is the indicator function of $A$: $\one_A(x)=1$ if $x\in A$ and $\one_A(x)=0$ if $x\notin A$.

%
%
%
%
%
%
%

\section{The stochastic model and the main result}\label{sec:2}
In this section we describe the ergodic harvesting problem with ambiguity. We start with a rigorous definition of the control problem as a two-player game, setting up the set of admissible controls for the players. Then, we introduce the payoff function and a set of candidate optimal controls for the DM. A relevant free boundary problem is provided. We intuitively explain how it is associated with the optimal control and the value. Finally, we introduce the assumptions on the model and state the main result of the paper.

\subsection{Dynamics and controls}\label{sec:21}
The rigorous definition of the control problem with ambiguity is now given. 
Consider a filtered probability space $(\Omega,\calF,\{\calF_t\},\PP)$ that supports
 a one-dimensional Wiener process $  W$ adapted to the filtration $\{\calF_t\}$ (satisfying the usual conditions)
and the process
\begin{align}\label{dynamics_0}
X_t=x+\int_0^tX_s\mu(X_s)ds+\int_0^t\sigma(X_s) dW_s,\quad t\in\R_+,
\end{align}
which represents the population size in the absent of harvesting. The functions $\mu$ and $\sigma$ satisfy some conditions given in Assumptions \ref{assumptions:absorption} and \ref{assumptions:main} below. The value $\mu(X_t)$ stands for the per-capita growth rate and $\sigma^2(X_t)/X_t^2$ is the infinitesimal variance of fluctuations in the per-capita growth rate.

A fundamental assumption that is in force throughout the paper is that the population size does not explode and does not go extinct in a finite time. For this we need the following definitions. Fix an arbitrary $c>0$. 
The density of the {\it scale function} of the unharvested process $X$ from \eqref{dynamics_0} under the probability measure $\PP$ 
is given by
\begin{align}\label{scale}
S_{\PP}'(x)=\exp\left(-\int_c^x\frac{2\mu(y)y}{\sigma^2(y)}dy\right),\qquad x \in(0,\iy).
\end{align}
\begin{assumption}\label{assumptions:absorption} The following hold:
\begin{enumerate}[(1)]
\item[(A0)]
\[
 \lim_{y\to 0+}S_\PP(y):= \lim_{y\to 0+} S_\PP((y,c))=\int_y^c S_\PP'(x)dx =-\infty,\qquad \lim_{y\to \iy} S_\PP(y)=\infty.
\]
\end{enumerate}
\end{assumption}
This assumption ensures that the SDE \eqref{dynamics_0} does not explode, has a pathwise unique solution, and that in many examples the unharvested process is positive recurrent and converges to its unique invariant probability measure. It is necessary to assume that $0$ is a boundary that cannot be attained in finite time by the unharvested diffusion. Moreover, it also natural to assume that $\PP(\lim_{t\to \infty} X_t =0)=0$, since otherwise the harvest yield might be zero. This implies that, following the boundary classification due to Feller \cite{KT81}, $0$ has to be either an entrance or a natural non-attracting boundary. This happens if and only if $S_\PP(0)=-\infty$ (see table 6.2 from \cite{KT81}).
\begin{remark}
We note that the related 
work of \cite{alv-hen2019}  
has an additional condition that ensures the speed measure is finite. This is done in order to make sure that the unharvested diffusion has a stationary distribution. We do not require this condition as our method of proof does not require ergodicity - we focus on ODE methods to explore the control problem.
\end{remark}

\begin{definition}[Admissible Controls]\label{def:admissible}
\begin{enumerate}[(i)]
\item An {\it admissible control for the DM} for any initial state $  x>0$ is
a nondecreasing process $ Z=(Z_t)_{t\in\R_+}$ taking values in $\R_+$ with RCLL sample paths adapted to the filtration $\{\calF_t\}$, such that the dynamics $(X^Z_t)_{t\in\R_+}$, satisfies,
\begin{align}\label{dynamics}
X^Z_t=x+\int_0^tX^Z_s\mu(X^Z_s)ds+\int_0^t\sigma(X^Z_s) dW_s- Z_t,\quad t\in\R_+,
\end{align}
with $  X^Z_t\ge 0$, $t\in\R_+$, $\PP$-almost surely (a.s.). The functions $\mu$ and $\sigma$ are measurable and satisfy some conditions, provided in Assumption \ref{assumptions:main} in the sequel.
	
	\item An {\it admissible control for the adverse player} is a measure $\Q$ defined on $(\Omega,\calF,\{\calF_t\})$,
\begin{align}\label{RN}
\frac{d \Q}{d\PP}(t)=\exp\left\{\int_0^t \theta(X^Z_s) d W_s-\frac{1}{2}\int_0^t  (\theta(X^Z_s))^2ds\right\},\quad t\in\R_+,
\end{align}
for a function $\theta:\R_+\to\R$, satisfying 
\begin{align}\label{eq:psi_cond}
\quad\E^{\PP}\left[e^{\frac{1}{2}\int_0^t (\theta(X^Z_s))^2ds}\right]<\iy,\quad t\in\R_+,
\end{align}
and such that the conditions in (A0) hold for $S_\Q'(x):=\exp\left(-\int_c^x\frac{2\mu(y)y+\sigma(y)\theta(y)}{\sigma^2(y)}dy\right)$. 
In the sequel, we refer to $\psi_t=\theta(X^Z_t), t\in\R_+,$ as the {\rm Girsanov kernel} of $\Q$.
\end{enumerate}
\end{definition}
We denote by $ \calZ( x)$ the set of all admissible controls for the DM, given the initial condition $x$. The set of all admissible controls for the adverse player is denoted by $ \calQ( x)$.

\begin{remark}\label{rem:strat}
One can write the dynamics from \eqref{dynamics} in the alternative form
\begin{align}\label{dynamics:Q}
X^Z_t= x+\int_0^t X^Z_s\mu(X^Z_s)ds+\int_0^t\sigma(X^Z_s)\psi_sds+ \int_0^t\sigma(X^Z_s)  dW^{\Q}_s- Z_t,\quad t\in\R_+,
\end{align}
where $ W^{ \Q}_t:= W_t-\int_0^t\psi_sds$, $t\in\R_+$, is an $\{\calF_t\}$-one-dimensional $\Q$-Wiener process. Under (A0) it follows that $\PP(X_t>0, t>0|X_0=x)=1, x>0$. By the definition of admissible controls $\Q\in\calQ(x)$, this condition is in force also under $\Q$; it implies that the adverse player cannot change the measure in a way that would lead to a finite-time extinction. The player has the strong belief that the population should not go extinct in a finite time. From a biological standpoint this restriction on the measures $\Q$ is very natural - we restrict ourselves to a reasonable neighborhood of the measure $\PP$, one where there are no extinctions.
\end{remark}

\subsection{The payoff function}\label{sec:22} 
\
Fix a parameter $\eps>0$ to which we refer to as the \textit{ambiguity parameter}. The ergodic expected payoff associated with the initial condition $x$ and the controls $Z$ and $ \Q$ is given by
\begin{align}\notag
J^\eps( x, Z, \Q):=\liminf_{T\to\iy} \frac{1}{T}\left\{\E^{ \Q}[Z_T]+\frac{1}{\eps}D^{\text{KL}}_T( \Q\|\PP)\right\}
,
\end{align}
where
\begin{align}\notag
%
&D^{\text{KL}}_T( \Q\|\PP):=\E^{ \Q}\left[\int_0^T\ln\left(\frac{d \Q}{d\PP}(t)\right)dt\right]
\end{align}
is the Kullback--Leibler divergence. The payoff function can be reformulated in the technically more convenient form
\begin{align}\label{cost}
J^\eps( x, Z, \Q)=&\liminf_{T\to\iy}\frac{1}{T}
\E^{ \Q}\Big[\int_0^T \Big(  d Z_t+\frac{1}{2\eps} \psi^2_tdt\Big) \Big],
\end{align}
where $ \psi$ is the Girsanov kernel of $\Q$.

The risk-neutral (no ambiguity) payoff is given by
\begin{equation}\label{no_amb}
\begin{split}
J^{0}( x, Z)&:=\liminf_{T\to\iy}\frac{1}{T}\E^{ \PP}\left[\int_0^{T} d Z_t \right]
.
\end{split}
\end{equation}
For comparison reasons we place the risk-neutral and the ambiguity models under the same umbrella. So, in our general setting, the risk-neutral payoff is associated with $\eps=0$ (we justify this in Remark \ref{rem:ambiguity} below). For any $\eps\ge 0$, we define the value function by
\begin{equation}\label{e:value}
V^{\eps}( x)=
\begin{cases}
\sup_{Z\in \calZ( x)}\;\inf_{ \Q\in \calQ( x)}\;J^\eps( x, Z, \Q), &\eps>0,\\
\sup_{Z\in \calZ( x)}\;J^0( x, Z), &\eps=0.
\end{cases}
\end{equation}An admissible control $Z$ is called an {\it optimal control} if it attains the value function, that is $V^\eps(x)=\inf_{ \Q\in \calQ( x)}\;J^\eps( x, Z, \Q)$ and in case $\eps=0$, $V^0(x)=J^0( x, Z)$.


\begin{remark}\label{rem:ambiguity}
Here we explain some of the intuition behind the game structure and explain it from a biological point of view. In the natural world we do not know the true model so we do not know the measure $\PP$. We therefore use a measure $\Q$ that we hope is close to $\PP$. The closeness of the measures is given by $D^{\text{KL}}_T( \Q\|\PP)$.
The intuition behind the payoff $J^\eps( x, Z, \Q)$ is the following. The term $\E^{ \Q}[Z_T]$ is the expected value under the measure $\Q$ of the total harvest between $0$ and $T$. The second term $\eps^{-1}D^{\text{KL}}_T( \Q\|\PP)$ is the penalization due to using the measure $\Q$ instead of the real measure $\PP$.
Informally, note that for large (small) values of $\eps$, the {\it penalty term} $(1/\eps)D^{\text{KL}}_T( \Q\|\PP)$ allows for large (small) values of the divergence. This means that the adverse player is more (less) likely to choose $\Q$'s that are farther away from $\PP$. In other words, larger (smaller) values of $\eps$ correspond to larger (smaller) level of ambiguity. As we show in Section \ref{sec:6}, as $\eps\to0+$, the penalty term averages out to zero and the problem convergence to the risk-neutral one.
\footnote{Yet, it is not so obvious that the $\eps^{-1}D^{\text{KL}}_T( \Q\|\PP)\to 0$ as $\eps\to 0+$. Indeed, the first term converges to $\iy$, while the second to $0$. One needs to show that the rate of convergence of the second term is faster. This is done in
Theorem \ref{thm:valuecont}.}
We divide the sum of the two payoff components by the time horizon $T$ and let $T\to \iy$ to get the \textit{penalized asymptotic yield} $J^\eps( x, Z, \Q)$. The optimization problem, becomes the following: the DM chooses a control $Z$ and the adverse player picks an (open loop) control $\Q$ in response, which is adapted to the same underlying filtration $\calF_t$. This control aims to be the worst possible measure for the DM, while the adverse player's hands are tied due to the divergence penalty term, and he is forced to choose a measure in an `$\eps$-neighborhood' of $\PP$.
%
%
\end{remark}

\subsection{Candidate controls for the DM: thresholod controls}\label{sec:23}
The ergodic control problem without ambiguity \eqref{no_amb} was studied by Alvarez and Hening in
\cite{alv-hen2019}. They proved that the optimal control for the DM is one that uses minimal effort to keep the population in a given interval of the form $[0,\beta]$, where $\beta$ depends on the parameters of the problem. Our main result shows that these types of controls are also optimal in the more general setting that includes an ambiguity.

To rigorously define such a control we make use of the {\it Skorokhod map on an interval}. Fix $\beta>0$. For any $\eta\in\calD(\R_+,\R)$ there exists a unique couple of functions $(\chi,\ph)\in\calD(\R_+,\R^2)$ that satisfies the following properties:
\noi
(i) for every $t\in\R_+$, $\chi(t)=\eta(t)-\ph(t)$;
\noi
(ii) $\ph$ is nondecreasing, $\ph(0-)=0$, and
\begin{align}\notag
	\int_0^\iy
	\one_{(-\iy,\beta)}(\chi(t))d\ph(t)=0.
\end{align}
We define $\Gamma_{\beta}[\eta]:=(\Gamma_{\beta}^2
,\Gamma_{\beta}^2)[\eta]=(\chi,\ph)$.
See \cite{Kruk2007} for the existence and uniqueness of solutions,
as well as the continuity and further properties of the map.

\begin{definition}\label{def_Skorokhod}
Fix $x,\beta\in[0,b]$. The control $Z=Z^{(\beta)}$ is called
a $\beta$-threshold control if for every $\eta\in \calC(\R_+,\R)$ one has $(X^Z, Z)(\eta)=\Gamma_{\beta}[\eta]$.
\end{definition}
\noindent One can easily verify that any $\beta$-threshold control is admissible in the sense of Definition \ref{def:admissible}.

\subsection{The free-boundary problem.}\label{sec:24}

We show that for any $\eps \geq 0$, there are positive constants $\ell^\eps$ and $\beta^\eps$, 
such that for any initial state $x>0$,
\begin{align}\notag
 \text{$V^\eps(x)=\sup_{\Q\in\calQ(x)}J^\eps(x,Z^{(\beta^\eps)},\Q)=\ell^\eps$ in case $\eps>0$, and 
$V^0(x)=J^0(x,Z^{(\beta^0)})=\ell^0$ in case $\eps=0$.}
\end{align}
%

As in \cite{alv-hen2019}, this suggests that the value $\ell^\eps$ and the threshold level $\beta^\eps$ can be characterized by an HJB equation that has the form of a free boundary problem with two parts. Motivated by the game structure, together with the dynamics and payoff forms given in \eqref{dynamics:Q} and \eqref{cost}, for any $\eps\ge 0$, let $\calL^\eps$ be the operator which acts on $f\in \calC^2$ as
\begin{align}\label{operator}
\begin{split}
\calL^\eps f(x)&:=\inf_{p\in\R}\Big\{\frac{1}{2}\sigma^2(x)f''(x)+(x\mu(x)+\sigma(x)p)f'(x)+\frac{1}{2\eps}p^2
\Big\}
\\
&=
\frac{1}{2}\sigma^2(x)f''(x)+x\mu(x)f'(x)-\frac{\eps}2\sigma^2(x)(f'(x))^2.
\end{split}
\end{align}
While the representation on the first line is not valid for $\eps=0$, the second one is valid for any $\eps\ge 0$ and coincides with the operator in the risk-neutral case, see \cite[equation (4)]{alv-hen2019}.

The relevant HJB equation is given in \eqref{HJB1}. However, follwoing our educated guess that the optimal control for the DM is a threshold-control, we choose to work with the following more explicit free bounday ODE. Namely,
we are looking for the maximal $\ell$ for which there exists $f\in \calC^2$ and a number $\hat x>0$ such that
\begin{align}\label{HJB_cutoff}
\begin{cases}
\calL^\eps f(x)=\ell,\quad f'(x)\ge 1, &x\in(0,\hat x],\\
\calL^\eps f(x)\le \ell,\quad f'(x)=1, &x\in(\hat x,\iy),
\end{cases}
\end{align}
as well as $f''(\hat x)=0$ and $f'(\hat x)=1$. In particular,
\begin{align}\label{l_eps_1}
\Leps(\hat x)=\ell,
\end{align}
where,
\begin{align}\label{l_eps_2}
\Leps(x):= x\mu(x)-\frac{\eps}{2}\sig^2(x),\qquad x\in (0,\infty).
\end{align}
\begin{definition}\label{def:23}
We denote by $(\hat x, \ell_{\hat x}, f)$ a {\it solution of \eqref{HJB_cutoff}--\eqref{l_eps_1}}, where $\ell_{\hat x}:=\Leps(\hat x)$. A solution $(\hat x, \ell_{\hat x}, f)$ is called an {\rm optimal solution of \eqref{HJB_cutoff}--\eqref{l_eps_1}} if for any other solution of \eqref{HJB_cutoff}--\eqref{l_eps_1}, say $(\hat y, \ell_{\hat y}, h)$, one has $\ell_{\hat y}\le \ell_{\hat x}$. We refer to $f$ as the {\rm potential function}.
\end{definition}

The rationale behind this is as follows: when the initial population size is $X^Z_0=x\in(\hat x,\iy)$, then in order to keep the process $X^Z$ between $0$ and $\hat x$, there is an instantaneous harvesting of size $x-\hat x$. When $x\in(0,\hat x)$ no action is being taken by the DM. When $X^Z$ hits the boundary $\hat x$, the threshold policy is taking action, leading to the Neumann boundary condition at $\hat x$. The population size will be kept in $(0, \hat x)$, with an initial harvest $0\vee (X^Z_0 - x)$ and then with harvesting only when the population size $X^Z$ is at level $\hat x$.

\subsection{Further assumptions and the main result}\label{sec:26}
We now present the second set of assumptions that hold throughout the paper.
\begin{assumption}\label{assumptions:main} The following hold:
\begin{enumerate}[(1)]
 \item[(A1)] The function $\sig:(0,\iy)\to(0,\iy)$ is increasing and continuously differentiable on $(0,\infty)$. Moreover,
 $\sig'$ is nondecreasing and bounded by a constant $\sig_0$. The function $\mu:[0,\iy)\to(0,\iy)$ is continuously differentiable on $(0,\infty)$ and the function $\mu'$ is bounded as $x\to 0+$. The function $x \mapsto \frac{x\mu(x)}{\sig(x)}$ is decreasing and bounded as $x \to 0+$.
We also assume that there exist numbers $\bar \sig, c ,\bar\mu>0$ such that for sufficiently small $x$, $|\sig(x) - \bar \sig x|\leq c x^2$ and $|\mu(x) - \bar \mu | \leq cx$.

\item[(A2)]  There exists $x^\eps\in(0,\iy)$ such that the function $\Leps$ defined in \eqref{l_eps_2} is increasing on $(0,x^\eps)$ and decreasing on $(x^\eps,\infty)$. Also, assume $\bar x^\eps:=\inf\{x\ge x^\eps:\Leps(x)=0\}$ is finite.

%
\end{enumerate}
\end{assumption}

Before discussing the assumption, we show that the assumptions hold in the most celebrated example in population dynamics, also referred to as the {\it Verhulst--Pearls diffusion} or the {\it logistic diffusion} model.
\begin{example}[Verhulst--Pearl diffusion]\label{ex:VP}
In this setting, the dynamics \eqref{dynamics_0} of the unharvested population is given by
\begin{align}\notag
dX_t=\bar\mu X_t(1-\bar \gamma X_t)dt+\bar \sigma X_tdW_t,\qquad t\in\R_+,
\end{align}
where $\bar\mu>0$ is the per-capita growth rate, $1/\bar\gamma>0$ is the carrying capacity, and $\bar\sigma^2$ is the infinitesimal variance of fluctuations in the per-capita growth rate. 
One can easily verify that Assumptions (A0)--(A2) hold in this example for any $\eps\in[0,\iy)$. In this case,
\begin{align}\notag
x^\eps=\frac{\bar \mu}{2\bar\mu\gamma+\eps\bar\sigma^2}\qquad\text{and}\qquad\bar x^\eps=2x^\eps.
\end{align}
In \cite{alv-hen2019}, where a finite speed measure is required, it is also necessary that the long-term behavior of the unharvested system, which is characterized by the stochastic growth rate
$r:=\bar \mu - \frac{\bar \sigma^2}{2}$ is positive. 
\end{example}

We now comment on the assumptions.

\begin{remark}
Part (A1) tells us that the functions $\sigma$ and $\mu$ are well-behaved and the diffusion is non-degenerate. In addition, there are some technical assumptions on the regularity, boundedness and monotonicity of the coefficients $\mu$ and $\sigma$. We note that these assumptions are similar to those from \cite{alv-hen2019} and \cite{JZ06}. The extra second order bounds around $x=0+$ compared to \cite{alv-hen2019} ensure that we can side-step the gap from the proof in \cite{alv-hen2019} - see Remark \ref{rem:AH}. Part (A2) here is the generalization of Assumption 2.2 (A2) from \cite{alv-hen2019} to the setting that includes ambiguity. In particular, this is natural in ecological applications: initially, at low densities the competition for resources is weak so the growth rate grows from $0$ at $0$ up to a maximal value, after which, due to competition, the growth rate decreases to $0$ and finally becomes negative.

In all biological applications we will have $\sigma(0)=\lim_{x\to 0+}\sigma(x)=0$ because the population cannot escape $0$ if it starts at $0$ - an extinct population will not get `resurrected'. Moreover, for most applications the natural choice is $\sigma(x) = \bar \sigma x$ for some $\bar \sigma>0$. In a biological setting it will also be natural to have that the unharvested population $X$ given by \eqref{dynamics_0} has a stationary distribution.

\end{remark}

We next present our main result. We prove that an optimal control for the DM exists and that it is a threshold control. Moreover, we show that the threshold level and the value function $V^{\eps}$ are characterized by the free-boundary problem \eqref{HJB_cutoff}--\eqref{l_eps_1}. Finally, we show that the population dynamics are stationary under the measure chosen by the adverse player. Even though our main interest is when there is ambiguity, i.e., when $\eps>0$, we consider also the risk-neutral case $\eps=0$ since there is a gap in the analysis 
from \cite{alv-hen2019}, which is filled in Proposition \ref{prop:upper} below; see also Remark
\ref{rem:AH} 
below. The proof of the Theorem is given in the next section.
\begin{theorem}[Main Theorem]\label{thm:main}
For any $\eps \geq 0$, the following hold:
\begin{enumerate}
\item There exists an optimal solution to \eqref{HJB_cutoff}--\eqref{l_eps_1}: $(\beta^\eps,\ell^\eps,v^\eps)$ and $\beta^\eps\in (x^\eps, \bar x^\eps)$.
\item The $\beta^\eps$-threshold control, denoted by $Z^{\eps}:=Z^{(\beta^\eps)}$, is optimal.
\item For any initial state $x>0$, the value of the problem is $\ell^\eps$. In other words, if $\eps>0$ then
\[
V^{\eps}( x)=\sup_{Z\in \calZ( x)}\;\inf_{ \Q\in \calQ( x)}\;J^\eps( x, Z, \Q)= \ell^\eps,\qquad\text{for any $x\in(0,\iy)$,}
\]
and if $\eps=0$ then
\[
V^{0}(x)={\sup_{Z\in\calZ(x)}}J^{0}( x, Z)=J^{0}( x, Z^0)=\ell^0,\qquad\text{for any $x\in(0,\iy)$}.
\]
\end{enumerate}
\end{theorem}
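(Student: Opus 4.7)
The plan is to derive Theorem \ref{thm:main} by assembling three ingredients: existence of a smooth solution to the free-boundary problem \eqref{HJB_cutoff}--\eqref{l_eps_1}, a matching lower bound obtained by plugging the threshold policy into the game, and a matching upper bound valid for every admissible control. Part (1) is exactly the content of Proposition \ref{prop:HJB}, which produces $(\beta^\eps,\ell^\eps,v^\eps)$ with $\beta^\eps\in(x^\eps,\bar x^\eps)$ via a shooting-method attack on the ODE obtained by imposing the threshold ansatz. Given Part (1), Parts (2) and (3) reduce to the pair of inequalities $\ell^\eps \le V^\eps(x) \le \ell^\eps$, with equality on the left attained by $Z^\eps := Z^{(\beta^\eps)}$.

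For the lower bound I would invoke Proposition \ref{prop:lower}, which asserts $\inf_{\Q\in\calQ(x)} J^\eps(x,Z^\eps,\Q) = \ell^\eps$; combined with \eqref{e:value}, this yields $V^\eps(x)\ge \ell^\eps$ at once. The underlying argument applies It\^o's formula to $v^\eps(X^{Z^\eps}_t)$ under an arbitrary $\Q\in\calQ(x)$ with Girsanov kernel $\psi$: on $(0,\beta^\eps]$ one uses $\calL^\eps v^\eps = \ell^\eps$, and the Skorokhod condition $(v^\eps)'(\beta^\eps)=1$ together with the fact that $Z^\eps$ grows only at $\beta^\eps$ handles the singular increment. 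Completing the square in $\psi$ against the penalty $\frac{1}{2\eps}\psi_t^2$ identifies the adversary's optimal kernel $\psi^\ast=-\eps\sigma(X^{Z^\eps})(v^\eps)'(X^{Z^\eps})$, and the boundary term $T^{-1}\E^\Q[v^\eps(X^{Z^\eps}_T)]$ vanishes as $T\to\iy$ because $X^{Z^\eps}$ is trapped in $[0,\beta^\eps]$ where $v^\eps$ is continuous.

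The main obstacle is the upper bound, Proposition \ref{prop:upper}, where the gap in \cite{alv-hen2019} has to be repaired. The scheme is to fix an arbitrary $Z\in\calZ(x)$, take $\psi^\ast_t := -\eps\sigma(X^Z_t)(v^\eps)'(X^Z_t)$ as the adversary's candidate kernel, verify that the induced $\Q^\ast$ lies in $\calQ(x)$ (requiring \eqref{eq:psi_cond} and the scale-tail condition of Definition \ref{def:admissible}), and apply It\^o to $v^\eps(X^Z_t)$ under $\Q^\ast$; the inequalities $(v^\eps)'\ge 1$ and $\calL^\eps v^\eps\le \ell^\eps$ extracted from \eqref{HJB_cutoff} then deliver \eqref{eq:bound_intro}. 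The hard part, as flagged in Section \ref{sec:13}, is that $v^\eps(x)\to -\iy$ as $x\to 0+$ in natural examples such as the Verhulst--Pearl diffusion of Example \ref{ex:VP}, while a singular $Z$ can push $X^Z$ arbitrarily close to $0$ via an instantaneous jump, so $T^{-1}\E^{\Q^\ast}[v^\eps(X^Z_T)]$ need not vanish; naive truncation of $v^\eps$ produces a non-negligible HJB violation that survives in the $T\to\iy$ limit.

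To circumvent this I would implement the perturbation sketched in Section \ref{sec:13}: using the shooting-method output of Lemmas \ref{lemma:deriv}--\ref{lemma:unibou}, pick a sequence $\beta_k\uparrow\beta^\eps$ and, for each $k$, construct $v_k\in\calC^2((0,\beta_k])$ solving $\calL^\eps v_k = \ell_k$ with $v_k'(\beta_k)=1$ and $v_k''(\beta_k)=0$; then extend $v_k$ to $(\beta_k,\iy)$ by the affine function of slope $1$ and call the result $\til v_k$. The key properties to verify are (i) $\til v_k$ is bounded below on $(0,\iy)$ for each fixed $k$ (this is what restricting to $\beta_k<\beta^\eps$ buys us); (ii) the HJB violation $\calL^\eps \til v_k - \ell^\eps$ is dominated by a sequence $\delta_k\to 0$; and (iii) $\ell_k\to\ell^\eps$ as $k\to\iy$. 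Re-running the It\^o computation with $\til v_k$ in place of $v^\eps$, taking $\liminf_{T\to\iy}$ first (so that the bounded-below boundary term contributes nonnegatively after division by $T$) and then sending $k\to\iy$, yields $\inf_{\Q\in\calQ(x)} J^\eps(x,Z,\Q)\le \ell^\eps$ for every $Z\in\calZ(x)$; consequently $V^\eps(x)\le\ell^\eps$, and together with the lower bound this establishes Parts (2) and (3) simultaneously.
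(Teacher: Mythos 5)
Your proposal is correct and follows essentially the same route as the paper: Part (1) is precisely Proposition \ref{prop:HJB}, and Parts (2)--(3) follow by sandwiching $V^\eps(x)$ between the lower bound of Proposition \ref{prop:lower} (realized by $Z^{(\beta^\eps)}$) and the upper bound of Proposition \ref{prop:upper}, whose proof uses the perturbed-and-truncated test functions you describe. The extra detail you supply about the internals of Propositions \ref{prop:lower} and \ref{prop:upper} matches the paper's arguments and does not change the logical structure of the theorem's proof.
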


\section{Proof of Theorem \ref{thm:main}.}\label{sec:3}
The proof of the main theorem relies on Propositions \ref{prop:HJB}--\ref{prop:upper} given below. For completeness, we provide the derivation of Theorem \ref{thm:main} at the end of this section. The proofs of the four propositions are given in Section \ref{sec:4}.

The main component in characterizing the value and the optimal control for the adverse player is via an optimal solution $(\beta^\eps,\ell^\eps,v^\eps)$ to \eqref{HJB_cutoff}--\eqref{l_eps_1}. The next proposition establishes the existence of such an optimal triplet. Note that the quantities $x^\eps, \bar x^\eps$ were defined in Assumption (A2).
\begin{proposition}\label{prop:HJB}
For any $\eps\ge 0$ there exists an optimal solution to \eqref{HJB_cutoff}--\eqref{l_eps_1} with $\beta^\eps\in(x^\eps,\bar x^\eps)$.
\end{proposition}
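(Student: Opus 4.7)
The plan is the shooting method adapted to this free-boundary problem. On the action region $(0,\hat x]$, setting $w := f'$ reduces the second-order equation $\calL^\eps f = \ell$ to the first-order Riccati-type ODE
\[
w'(x) = \frac{2}{\sigma^2(x)}\bigl[\ell - x\mu(x)\, w(x)\bigr] + \eps\, w(x)^2, \qquad x \in (0, \hat x],
\]
and the prescribed boundary data $w(\hat x) = 1$, $w'(\hat x) = 0$ (the latter being $f''(\hat x) = 0$), substituted into this ODE at $x = \hat x$, immediately force $\ell = \Leps(\hat x)$, recovering \eqref{l_eps_1}. So the problem collapses to a single free parameter $\hat x$, and the backward Cauchy problem for $w_{\hat x}$ on $(0,\hat x]$ is well posed. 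Meanwhile on $(\hat x, \infty)$ the Ansatz $f'\equiv 1$ gives $\calL^\eps f(x) = \Leps(x)$, so the inequality $\calL^\eps f \leq \ell$ there is equivalent to $\Leps(x) \leq \Leps(\hat x)$ for $x>\hat x$; by the unimodality of $\Leps$ in (A2), this forces $\hat x \geq x^\eps$, and the requirement $\ell \geq 0$ (zero harvesting is always admissible) forces $\hat x \leq \bar x^\eps$. Hence the shooting range is $[x^\eps, \bar x^\eps]$.

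I would then invoke the preliminary results (Lemmas \ref{lemma:deriv}--\ref{lemma:unibou}), which are tailored to this ODE: they provide existence, uniqueness, and continuous monotone dependence of $w_{\hat x}$ on the parameter $\hat x$, together with uniform estimates controlling the behavior near the singular endpoint $x = 0+$, where $\sigma(x)\to 0$ makes the source term $2\ell/\sigma^2(x)$ explode. With these in hand, I define a shooting functional that records whether the admissibility constraint $w_{\hat x}(x)\geq 1$ holds on $(0,\hat x]$, for instance
\[
\Phi(\hat x) := \inf_{x \in (0,\hat x]}\bigl(w_{\hat x}(x) - 1\bigr).
\]
By the lemmas, $\Phi$ is continuous on $(x^\eps, \bar x^\eps)$. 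An asymptotic analysis at the endpoints, using the refined local bounds $|\sigma(x)-\bar\sigma x|\leq c x^2$ and $|\mu(x)-\bar\mu|\leq cx$ from (A1), shows that $\Phi$ takes strictly opposite signs near $x^\eps$ and near $\bar x^\eps$. An intermediate-value / infimum argument then picks $\beta^\eps \in (x^\eps, \bar x^\eps)$ on the boundary of $\{\hat x : \Phi(\hat x)\geq 0\}$; because $\Leps$ is decreasing on $(x^\eps, \bar x^\eps)$, this selection precisely maximizes $\ell_{\hat x}$ among admissible thresholds, yielding the \emph{optimal} solution in the sense of Definition \ref{def:23}. Setting $v^\eps(x) := \int_{\beta^\eps}^x w_{\beta^\eps}(y)\,dy$ on $(0,\beta^\eps]$ and extending linearly by $v^\eps(x) := v^\eps(\beta^\eps) + (x-\beta^\eps)$ on $(\beta^\eps,\infty)$ yields a $\calC^2$ potential with the smooth fit $(v^\eps)'(\beta^\eps) = 1$, $(v^\eps)''(\beta^\eps) = 0$ that solves the full free-boundary problem.

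The hardest step is the analysis of $w_{\hat x}$ as $x\to 0+$. Because $\sigma\to 0$ there, the ODE is singular, and without precise control one can establish neither the uniform bounds on $w_{\hat x}$ nor the fact that $\Phi$ changes sign strictly inside $(x^\eps, \bar x^\eps)$ rather than degenerating at an endpoint. This is exactly why (A1) imposes the second-order refinements $|\sigma(x)-\bar\sigma x|\leq c x^2$ and $|\mu(x)-\bar\mu|\leq cx$ absent from \cite{alv-hen2019}; these same refinements will also be what lets Proposition \ref{prop:upper} close the gap flagged in Remark \ref{rem:AH}. Once the preliminary ODE lemmas are in place, the reduction to the Riccati form, the linear extension past $\beta^\eps$, and the intermediate-value selection of the threshold are essentially standard.
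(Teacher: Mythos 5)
Your overall strategy matches the paper's: reduce to a first-order Riccati ODE for $w = f'$ on $(0,\hat x]$, note that the boundary data at $\hat x$ force $\ell = \Leps(\hat x)$ so that the problem has a single free parameter, restrict the shooting range to $[x^\eps,\bar x^\eps]$, and select the smallest admissible threshold to maximize $\ell$ (since $\Leps$ is decreasing there). The paper carries this out by defining $B^\eps := \{b>0 : g_b(x)\ge 1 \ \forall\, x\in(0,b]\}$ and setting $\beta^\eps := \inf B^\eps$, then verifying $\beta^\eps \in B^\eps$, which is exactly your ``boundary of $\{\hat x : \Phi(\hat x)\ge 0\}$'' picture. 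That said, two steps in your outline rest on claims the paper's lemmas do not actually supply and that you would need to argue differently.

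First, you assert that $\Phi(\hat x) := \inf_{x\in(0,\hat x]}\bigl(w_{\hat x}(x)-1\bigr)$ is continuous on $(x^\eps,\bar x^\eps)$ ``by the lemmas.'' But Lemma~\ref{lemma:continuity} delivers only pointwise convergence $g_{b+\delta}(y)\to g_b(y)$ for \emph{fixed} $y>0$; there is no uniform-near-$0$ estimate, and the domain $(0,\hat x]$ reaches the singular point where $\sigma\to0$ and $g_b$ typically blows up. Continuity of an infimum over a non-compact, singular domain does not follow from pointwise convergence. The paper sidesteps $\Phi$ entirely: it uses the comparison result (Lemma~\ref{lemma:comparison}) to get monotone convergence $g_{b_i}(y)\downarrow g_{\beta^\eps}(y)$ as $b_i\downarrow\beta^\eps$, so the pointwise property ``$g_{b_i}(y)\ge 1$'' passes to the limit for each fixed $y$, and hence $\beta^\eps \in B^\eps$. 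This is strictly weaker (only the one-sided closure you actually need) and is exactly what the pointwise lemma can deliver; honest continuity of $\Phi$ is neither proven nor required.

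Second, your justification that $\hat x\le\bar x^\eps$ --- ``the requirement $\ell\ge 0$ (zero harvesting is always admissible) forces $\hat x\le\bar x^\eps$'' --- imports a game-theoretic fact about the value that is not available at the stage of the pure ODE problem: nothing in \eqref{HJB_cutoff}--\eqref{l_eps_1} itself imposes $\ell\ge 0$, and the statement ``the value is nonnegative'' is a \emph{consequence} of Propositions~\ref{prop:lower}--\ref{prop:upper}, which in turn depend on Proposition~\ref{prop:HJB}. To close the intermediate-value argument you must show that the right endpoint of the shooting range is actually admissible, i.e.\ $\bar x^\eps\in B^\eps$. The paper proves this purely at the ODE level in Lemma~\ref{lemma:brange}, using the $\gamma<0$ perturbation and a sign argument at the first crossing point. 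Without that ODE-level verification, the sign change you posit near $\bar x^\eps$ is not established, and the selection of $\beta^\eps$ does not go through. The remaining pieces of your outline (the Cole--Hopf/Riccati reduction, the linear extension of $v^\eps$ past $\beta^\eps$ with the smooth fit, the role of the refined (A1) bounds near $x=0+$) are in line with the paper.
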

Let $(\beta^\eps, \ell^\eps, v^\eps)$ be the optimal  solution of \eqref{HJB_cutoff}--\eqref{l_eps_1} given in Proposition \ref{prop:HJB}. The next proposition is needed for technical reasons in order to prove Proposition \ref{prop:lower} below and to obtain comparative statics (see Section \ref{sec:6}). 

\begin{proposition} \label{prop:bounded}
For any $\eps\ge 0$, the function $\sig(\cdot) (v^\eps)'(\cdot)$ is bounded above by $\sig(\beta^\eps)$ on $(0,\beta^\eps]$. 
\end{proposition}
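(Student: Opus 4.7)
The plan is to introduce $h^\eps(x) := \sigma(x)(v^\eps)'(x)$ on $(0,\beta^\eps]$ and analyze it directly via the ODE $\calL^\eps v^\eps = \ell^\eps$ from \eqref{HJB_cutoff}. The free-boundary conditions $(v^\eps)'(\beta^\eps)=1$ and $(v^\eps)''(\beta^\eps)=0$ immediately give $h^\eps(\beta^\eps) = \sigma(\beta^\eps)$ and $(h^\eps)'(\beta^\eps) = \sigma'(\beta^\eps) \geq 0$, so the claim reduces to showing that the supremum of $h^\eps$ on $(0,\beta^\eps]$ is attained at the right endpoint.

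The main step will be the classification of interior critical points. At any $x_0 \in (0,\beta^\eps)$ with $(h^\eps)'(x_0)=0$, one has $(v^\eps)''(x_0) = -\sigma'(x_0)(v^\eps)'(x_0)/\sigma(x_0)$ directly from the definition of $h^\eps$. Substituting this into the once-differentiated form of $\calL^\eps v^\eps = \ell^\eps$ and solving for $(v^\eps)'''(x_0)$---the quadratic-in-$(v^\eps)'$ cross terms carrying $\eps$ (namely $\eps\sigma\sigma'((v^\eps)')^2$ and $-\eps\sigma^2(v^\eps)'(v^\eps)''$) cancel at $x_0$---and assembling $(h^\eps)''(x_0) = \sigma''(v^\eps)' + 2\sigma'(v^\eps)'' + \sigma(v^\eps)'''$, one arrives, after recognizing $(\mu+x\mu')/\sigma - x\mu\sigma'/\sigma^2 = (x\mu/\sigma)'$, at the clean identity
\[
(h^\eps)''(x_0) \;=\; (v^\eps)'(x_0)\left[\sigma''(x_0) - 2\left(\tfrac{x\mu(x)}{\sigma(x)}\right)'\Big|_{x=x_0}\right].
\]
By Assumption~(A1), $\sigma'$ is nondecreasing so $\sigma''(x_0) \geq 0$, the map $x \mapsto x\mu(x)/\sigma(x)$ is strictly decreasing so $(x\mu/\sigma)'(x_0) < 0$, and $(v^\eps)'(x_0) \geq 1$ from \eqref{HJB_cutoff}; hence $(h^\eps)''(x_0) > 0$ strictly. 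Thus every interior critical point of $h^\eps$ in $(0,\beta^\eps)$ is a strict local minimum, so there can be at most one of them.

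Two cases then remain. If $h^\eps$ has no interior critical point, then $(h^\eps)'$ keeps its sign on $(0,\beta^\eps)$ and, being nonnegative at $\beta^\eps$, is nonnegative everywhere, so $h^\eps$ is nondecreasing and $h^\eps(x) \leq h^\eps(\beta^\eps) = \sigma(\beta^\eps)$ as desired. In the remaining case there is a unique interior minimizer $x_0$; then $h^\eps$ is nondecreasing on $[x_0,\beta^\eps]$ and the bound is immediate there, and by monotonicity on $(0,x_0]$ the task reduces to controlling $\lim_{x\to 0+} h^\eps(x)$. I plan to combine the near-origin quantitative estimates $|\sigma(x)-\bar\sigma x|\leq cx^2$ and $|\mu(x)-\bar\mu|\leq cx$ from (A1) with the uniform a priori bounds from the shooting construction in Lemmas~\ref{lemma:deriv}--\ref{lemma:unibou} to force $(v^\eps)'(x)\sim A/x$ as $x\to 0+$, where $A$ is determined by the algebraic relation $A(\bar\mu-\tfrac12\bar\sigma^2 - \tfrac{\eps}{2}\bar\sigma^2 A)=\ell^\eps$, so that $h^\eps(0+)=\bar\sigma A$; combined with $\ell^\eps = \Leps(\beta^\eps)$ and $\beta^\eps\in(x^\eps,\bar x^\eps)$ from Proposition~\ref{prop:HJB}, this should give $\bar\sigma A \leq \sigma(\beta^\eps)$.

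The hard part will be making the Case~2 asymptotic comparison near $0$ fully rigorous, since the boundary data at $\beta^\eps$ must propagate through the Riccati-type nonlinearity all the way to the origin, where $\sigma$ degenerates and $(v^\eps)'$ blows up like $1/x$. A promising alternative that sidesteps Case~2 entirely is to show directly that $(h^\eps)'(0+)\geq 0$---using the expansion above, its leading coefficient is $\bar\sigma B + \sigma''(0)A/2$ with $B$ determined by matching the $O(x)$ terms in the ODE, and in Example~\ref{ex:VP} this reduces to $\bar\sigma^2 \bar\gamma A > 0$; if one can argue this sign positively under (A1), then Case~2 never occurs and the proof collapses to Case~1.
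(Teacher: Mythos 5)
Your structural idea---study $h^\eps = \sigma\,(v^\eps)'$ directly, classify its interior critical points via the differentiated ODE, and exploit the monotonicity hypotheses on $\sigma'$ and $x\mapsto x\mu(x)/\sigma(x)$---is in the same spirit as the paper's argument, but the paper applies this kind of comparison to the approximating family $h_b = \sigma g_b$ with $b < \beta^\eps$, not to $h^\eps$ itself, and that difference is exactly what removes your Case~2. For $b < \beta^\eps$, Lemma~\ref{lemma:leqone} gives $g_b(x) < 1$ on a right neighborhood of $0$, hence $h_b(x) = \sigma(x)g_b(x) < \sigma(x) \le \sigma(b)$ there, so the bound near the origin comes for free; combined with $h_b'(b)>0$ and a two-point comparison at level-crossings of $\sigma(b)$ in the interior, one gets $h_b \le \sigma(b)\le\sigma(\beta^\eps)$, and Lemma~\ref{lemma:continuity} transfers this to $h^\eps$ by letting $b\uparrow\beta^\eps$. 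At $b = \beta^\eps$ this mechanism is unavailable: by definition of $\beta^\eps$, $g_{\beta^\eps}\ge 1$ all the way to $0+$, so there is no analogue of Lemma~\ref{lemma:leqone}, and you are pushed into the asymptotic matching that you yourself flag as ``the hard part.'' That part is a genuine gap: the claim $(v^\eps)'(x)\sim A/x$ as $x\to0+$ with $A$ solving your quadratic is only a formal balance of leading terms (one must rule out other growth rates and pick the correct root), and even granting it, the final inequality $\bar\sigma A \le \sigma(\beta^\eps)$ does not drop out of $\ell^\eps = \Leps(\beta^\eps)$ and $\beta^\eps\in(x^\eps,\bar x^\eps)$ by any short algebra.

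There is also a regularity obstruction in your second-derivative test. Assumption~(A1) only gives $\sigma\in\calC^1$ with $\sigma'$ nondecreasing; $\sigma''$ need not exist, so $(h^\eps)''$ and the identity $(h^\eps)''(x_0) = (v^\eps)'(x_0)\bigl[\sigma''(x_0) - 2(x\mu/\sigma)'(x_0)\bigr]$ are not well-defined in general. The paper's Lemma~\ref{lemma:unibou} avoids this by comparing $h_b'$ at two distinct crossing points $y_5<y_4$ of the level $\sigma(b)$, which needs only $\sigma'(y_5)\le\sigma'(y_4)$ and $y_5\mu(y_5)/\sigma(y_5) > y_4\mu(y_4)/\sigma(y_4)$---no $\sigma''$. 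Relatedly, even when $\sigma''$ exists, ``$x\mu/\sigma$ decreasing'' yields only $(x\mu/\sigma)'\le 0$ pointwise, so $(h^\eps)''(x_0)>0$ is not guaranteed, and without strictness the ``at most one critical point, necessarily a minimum'' step fails. The fix is to replace the local test by the paper's two-point comparison, run it for $h_b$ with $b<\beta^\eps$ so that Lemma~\ref{lemma:leqone} controls the origin, and then pass to the limit with Lemma~\ref{lemma:continuity}.
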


The next proposition states that by using the $\beta^\eps$-threshold control, $Z^\eps:=Z^{(\beta^\eps)}$, the DM attains at least the value $\ell^\eps$
. In particular, it provides a lower bound for the value.
\begin{proposition}\label{prop:lower}
For the optimal solution of \eqref{HJB_cutoff}--\eqref{l_eps_1} $(\beta^\eps, \ell^\eps, v^\eps)$, one has
\begin{align}\notag
\ell^\eps=\inf_{\Q\in\calQ(x)}J^\eps(x,Z^\eps,\Q),\qquad x\in(0,\iy),
\end{align}
for $\eps>0$, and 
\begin{align}\notag
\ell^0=J^0(x,Z^\eps),\qquad x\in(0,\iy),
\end{align}
for $\eps=0$.
\end{proposition}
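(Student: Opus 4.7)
The plan is to obtain a closed-form pathwise identity via It\^o's formula applied to $v^\eps(X^{Z^\eps}_t)$ under an arbitrary admissible $\Q\in\calQ(x)$ with Girsanov kernel $\psi$. Starting from the $\Q$-dynamics \eqref{dynamics:Q} and the $\calC^2$ regularity of $v^\eps$, I would use the fact that $dZ^\eps_t$ is supported on $\{X^{Z^\eps}_t=\beta^\eps\}$ with $(v^\eps)'(\beta^\eps)=1$ (and that the linear extension $(v^\eps)'\equiv 1$ on $(\beta^\eps,\infty)$ absorbs any initial jump $(x-\beta^\eps)^+$), complete the square in $\psi$, and invoke the HJB identity $\calL^\eps v^\eps=\ell^\eps$ on $(0,\beta^\eps]$ to arrive at
\begin{equation*}
Z^\eps_T+\frac{1}{2\eps}\int_0^T\psi_t^2\,dt = v^\eps(x)-v^\eps(X^{Z^\eps}_T)+\ell^\eps T+\frac{1}{2\eps}\int_0^T\bigl(\psi_t+\eps\,\sig(X^{Z^\eps}_t)(v^\eps)'(X^{Z^\eps}_t)\bigr)^2 dt + M_T,
\end{equation*}
where $M_T=\int_0^T(v^\eps)'(X^{Z^\eps}_t)\sig(X^{Z^\eps}_t)\,dW^\Q_t$ is a true $\Q$-martingale since Proposition \ref{prop:bounded} makes its integrand uniformly bounded by $\sig(\beta^\eps)$.

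For the lower bound, the square term is nonnegative, and $(v^\eps)'\ge 1$ on $(0,\beta^\eps]$ with the linear extension beyond gives $v^\eps(X^{Z^\eps}_T)\le v^\eps(\beta^\eps)$ for every $T>0$. Taking $\E^\Q$, dividing by $T$, and passing to $\liminf_{T\to\infty}$ yields $J^\eps(x,Z^\eps,\Q)\ge\ell^\eps$ for every admissible $\Q$, hence $\inf_\Q J^\eps(x,Z^\eps,\Q)\ge\ell^\eps$. The risk-neutral case is the same identity with $\psi\equiv 0$ under $\Q=\PP$, which immediately gives $J^0(x,Z^0)\ge\ell^0$.

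For the matching upper bound I would take the candidate worst-case kernel $\psi^*_t:=-\eps\,\sig(X^{Z^\eps}_t)(v^\eps)'(X^{Z^\eps}_t)$, the pointwise minimizer in the definition of $\calL^\eps v^\eps$. Proposition \ref{prop:bounded} bounds $|\psi^*|$ by $\eps\,\sig(\beta^\eps)$, so \eqref{eq:psi_cond} holds and the associated $\Q^*$ is well defined; to complete admissibility I would verify condition (A0) for $S_{\Q^*}$ by direct computation, relying on the near-origin asymptotics of $\sig(v^\eps)'$ supplied again by Proposition \ref{prop:bounded} to show that the drift perturbation $-\eps\sig^2(v^\eps)'$ does not destroy the inaccessibility of $0$. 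With this choice the square term vanishes and the identity collapses to
\begin{equation*}
\frac{1}{T}\E^{\Q^*}\Bigl[Z^\eps_T+\frac{1}{2\eps}\int_0^T(\psi^*_t)^2\,dt\Bigr] = \ell^\eps+\frac{v^\eps(x)-\E^{\Q^*}[v^\eps(X^{Z^\eps}_T)]}{T}.
\end{equation*}

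The hard part will be showing that $\E^{\Q^*}[v^\eps(X^{Z^\eps}_T)]=o(T)$. A local analysis of the HJB near the origin under (A1) (where $\sig(x)\sim\bar\sig x$ and $\mu(x)\sim\bar\mu$) suggests $v^\eps(x)\sim(\ell^\eps/\bar\mu)\log x$ as $x\to 0+$, so $v^\eps$ is unbounded below and the trivial upper bound alone yields only the wrong-direction inequality. The natural route is positive recurrence of the reflected diffusion $X^{Z^\eps}$ on $(0,\beta^\eps]$ under $\Q^*$: (A0) for $\Q^*$ keeps the trajectory off $0$, Skorokhod reflection at $\beta^\eps$ keeps it bounded, and the resulting invariant probability $\pi^*$ should render the logarithmic singularity of $v^\eps$ integrable via an explicit speed-density computation, so that $\E^{\Q^*}[v^\eps(X^{Z^\eps}_T)]\to\int v^\eps\,d\pi^*$ is finite as $T\to\infty$. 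Since the paper deliberately avoids the finite-speed-measure hypothesis of \cite{alv-hen2019}, should direct ergodicity prove too stringent the backup is the perturbation device advertised for Proposition \ref{prop:upper}: apply the identity with a sequence of potentials associated with threshold levels $\beta_n\nearrow\beta^\eps$ (produced by the shooting-method proof of Proposition \ref{prop:HJB}), exploit that along this sequence the terminal values are bounded, take $\liminf_{T\to\infty}$ first for each $n$, and then send $n\to\infty$.
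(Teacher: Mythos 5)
Your lower--bound argument (the $\ge$ direction) is essentially the paper's proof: apply It\^o to $v^\eps(X^{Z^\eps})$ under an arbitrary $\Q$, complete the square in $\psi$ (equivalently, use the $\inf_p$ form of $\calL^\eps$), use $(v^\eps)'(\beta^\eps)=1$ and the support of $dZ^\eps$ to turn $\int (v^\eps)'\,dZ^\eps$ into $Z^\eps_T$, bound $v^\eps(X^{Z^\eps}_T)\le v^\eps(\beta^\eps)$ by monotonicity, kill the stochastic integral via the bound of Proposition~\ref{prop:bounded}, and pass to $\liminf_{T\to\infty}$. One small technical patch: because $(v^\eps)''$ blows up as $x\to0+$, It\^o's formula should first be applied up to the localizing times $T^n=T\wedge\inf\{t\ge0: X^{Z^\eps}_t\notin[1/n,n]\}$, and then $n\to\infty$ is justified because $\Q$-admissibility forces $X^{Z^\eps}$ to avoid $0$ in finite time $\Q$-a.s. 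This is exactly how the paper handles it.

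Where you diverge from the paper is in attempting the matching inequality $\inf_\Q J^\eps(x,Z^\eps,\Q)\le\ell^\eps$ \emph{inside} Proposition~\ref{prop:lower}. The paper does not do this: its proof of Proposition~\ref{prop:lower} proves only the $\ge$ direction, and the $\le$ direction is simply Proposition~\ref{prop:upper} specialized to $Z=Z^\eps$ (indeed, the Main Theorem's proof only uses the $\ge$ half of Proposition~\ref{prop:lower}). Your diagnosis of the difficulty is exactly right and matches Remark~\ref{rem:AH}: with $\psi^*=-\eps\sig(v^\eps)'$ the square term vanishes and the obstacle is that $v^\eps$ is unbounded below, so the trivial bound $\E^{\Q^*}[v^\eps(X^{Z^\eps}_T)]\le v^\eps(\beta^\eps)$ goes the wrong way. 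However, your Plan~A (positive recurrence of the reflected diffusion under $\Q^*$ and integrability of the $\log$-singularity against the invariant density) is precisely the sort of finite-speed-measure/ergodicity hypothesis that this paper deliberately removes relative to \cite{alv-hen2019}, so it is not an admissible route under the stated assumptions, and in any event the required integrability of $v^\eps$ against the invariant measure would need a separate argument. Your Plan~B --- take perturbed potentials $v_b$ associated with thresholds $b\nearrow\beta^\eps$, which are bounded below and violate the HJB only by a vanishing $\delta_b$, take $\liminf_{T\to\infty}$ for fixed $b$ and then $b\to\beta^\eps-$ --- is exactly the paper's proof of Proposition~\ref{prop:upper}; carrying it out here would just reprove that proposition restricted to $Z=Z^\eps$. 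So the proposal is correct in content, but it conflates the division of labor between Propositions~\ref{prop:lower} and~\ref{prop:upper}.
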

Finally, the next proposition provides an upper bound for the value. 
\begin{proposition}\label{prop:upper}
For the optimal solution of \eqref{HJB_cutoff}--\eqref{l_eps_1} $(\beta^\eps, \ell^\eps, v^\eps)$, one has
\begin{align}\notag
\sup_{Z\in\calZ(x)}\inf_{\Q\in\calQ(x)}J^\eps(x,Z,\Q)\le \ell^\eps,\qquad x\in(0,\infty),
\end{align}
for $\eps > 0$,
and
\begin{align}\label{eq:upper:bound1}
 \sup_{Z\in\calZ(x)}J^0(x,Z)\le \ell^0,\qquad x\in(0,\infty),
\end{align}
 for $\eps = 0$.
\end{proposition}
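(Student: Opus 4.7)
The plan is to fix an arbitrary admissible $Z\in\calZ(x)$, construct a tailored admissible measure $\Q^*\in\calQ(x)$ depending on $Z$, and use the pointwise inequalities satisfied by $v^\eps$ to derive $J^\eps(x,Z,\Q^*)\le\ell^\eps$. For $\eps>0$ the natural Girsanov kernel is the unique minimizer in \eqref{operator},
\[
\psi^*_t:=-\eps\,\sigma(X^Z_t)(v^\eps)'(X^Z_t),
\]
with corresponding measure $\Q^*$ defined through \eqref{RN}. An It\^o expansion of $v^\eps(X^Z_t)$ under $\Q^*$, combined with the identity $\tfrac{1}{2}\sigma^2(v^\eps)''+(X\mu+\sigma\psi^*)(v^\eps)'=\calL^\eps v^\eps-\tfrac{1}{2\eps}(\psi^*)^2$, the HJB inequality $\calL^\eps v^\eps\le\ell^\eps$, and two applications of $(v^\eps)'\ge 1$ (one to absorb the absolutely continuous part of $Z$ and one for the jumps via the mean value theorem) would yield
\[
v^\eps(X^Z_T)-v^\eps(x)\le \ell^\eps T-Z_T-\frac{1}{2\eps}\int_0^T(\psi^*_t)^2\,dt+M_T,
\]
where $M$ is a local $\Q^*$-martingale.

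Taking $\E^{\Q^*}$, dividing by $T$, and passing to $\liminf_{T\to\iy}$, the bound $J^\eps(x,Z,\Q^*)\le\ell^\eps$ would follow provided $T^{-1}\E^{\Q^*}[v^\eps(X^Z_T)]$ does not diverge to $-\iy$. This is exactly where the gap in \cite{alv-hen2019} arises, and where Remark \ref{rem:AH} will enter: the asymptotic analysis of \eqref{HJB_cutoff} near $0+$ under Assumption (A1) shows that $v^\eps$ behaves like a constant multiple of $\ln x$ and is therefore unbounded below at $0$. Since $Z$ is singular, $X^Z$ can be pushed arbitrarily close to $0$ in zero time, and $\E^{\Q^*}[v^\eps(X^Z_T)]$ cannot be controlled by a naive lower bound on $v^\eps$.

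The remedy, previewed in Section \ref{sec:13}, is to replace $v^\eps$ by a sequence of perturbed potentials $\{v^{\eps,n}\}_{n\ge 1}$ produced by the shooting-method machinery of Section \ref{sec:4}. Choose $\hat x_n\uparrow\beta^\eps$ and let $v^{\eps,n}$ solve $\calL^\eps f=\Leps(\hat x_n)$ on $(0,\hat x_n]$ with Neumann data $f'(\hat x_n)=1,\ f''(\hat x_n)=0$, extended linearly on $(\hat x_n,\iy)$. Then $(v^{\eps,n})'\ge 1$ and the monotonicity of $\Leps$ on $(x^\eps,\bar x^\eps)$ together with $\beta^\eps\in(x^\eps,\bar x^\eps)$ deliver $\calL^\eps v^{\eps,n}\le \ell^\eps+\delta_n$ with $\delta_n:=\Leps(\hat x_n)-\ell^\eps\to 0$ by continuity of $\Leps$. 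Crucially, the preliminary ODE lemmas (\ref{lemma:deriv}--\ref{lemma:unibou}) will ensure that each $v^{\eps,n}$ is bounded below on $(0,\iy)$. Rerunning the It\^o argument with $v^{\eps,n}$ in place of $v^\eps$ and the adjusted kernel $\psi^n_t:=-\eps\,\sigma(X^Z_t)(v^{\eps,n})'(X^Z_t)$ (with associated $\Q_n\in\calQ(x)$) yields $J^\eps(x,Z,\Q_n)\le\ell^\eps+\delta_n$, so that $\inf_\Q J^\eps(x,Z,\Q)\le\ell^\eps+\delta_n$; sending $n\to\iy$ closes the ambiguous case. The risk-neutral case \eqref{eq:upper:bound1} is obtained by the same template under $\PP$, with no adverse player and no quadratic penalty.

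The main obstacle I anticipate is twofold. First, verifying the uniform lower boundedness of the $v^{\eps,n}$ and the convergence $\delta_n\to 0$ relies on the fine ODE-level estimates of Section \ref{sec:4}, whose formulation is itself dictated by the needs of this upper-bound proof. Second, upgrading $M$ (and its analogue for $v^{\eps,n}$) from a local to a true martingale so that $\E^{\Q^*}[M_T]=0$ requires a standard localization argument combined with the growth control on $\sigma(\cdot)(v^\eps)'(\cdot)$ furnished by Proposition \ref{prop:bounded}; without this boundedness, even the construction of $\Q^*$ via \eqref{eq:psi_cond} is not immediate.
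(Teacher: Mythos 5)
Your high-level architecture is right — use the minimizing kernel $\psi^* = -\eps\sigma (v^\eps)'$, apply It\^o, invoke the two HJB inequalities, and, to sidestep the unboundedness of $v^\eps$ at $0+$, perturb $\beta^\eps$ from below via a sequence $\hat x_n \uparrow \beta^\eps$ and pass to the limit. This matches the paper's plan. But the execution has a genuine gap at the single most delicate point, which is precisely where the paper does the real work.

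The gap: you set $(v^{\eps,n})' = g_{\hat x_n}$ on $(0,\hat x_n]$ and claim that $(v^{\eps,n})' \ge 1$ and that each $v^{\eps,n}$ is bounded below, attributing this to ``the preliminary ODE lemmas.'' Both claims fail. By the very definition $\beta^\eps = \inf B^\eps$ (with $B^\eps := \{b: g_b \ge 1 \text{ on } (0,b]\}$) and by Lemma \ref{lemma:leqone}, for every $b \in (x^\eps, \beta^\eps)$ there is a right-neighborhood $(0,y_1]$ of $0$ on which $g_b(x) \le 1$; in fact $g_b$ strictly dips below $1$ there. So $(v^{\eps,n})' \ge 1$ is false for every $\hat x_n < \beta^\eps$, and the term $-\int_0^{T}[(v^{\eps,n})'-1]\,dZ$ cannot be dropped. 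Worse, the preliminary lemmas do not give any lower bound on $g_{\hat x_n}$ near $0$, so $v^{\eps,n}(x) \to -\iy$ as $x \to 0+$ is not excluded — exactly the pathology you are trying to cure.

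What the paper actually does is more surgical: it sets $\al_b := \inf\{x>0 : g_b \ge 1 \text{ on } [x,b]\}$, keeps $(v_b)' = g_b$ only on $[\al_b,\iy)$ (where $g_b \ge 1$ is guaranteed), and replaces $g_b$ by its tangent line at $\al_b$ on $(0,\al_b)$. This makes $v_b$ $\calC^2$, with a derivative that is affine and hence bounded near $0$, so $v_b$ is bounded below — but at the price that $v_b$ no longer solves the HJB on $(0,\al_b)$. The entire second half of the proof (the decomposition $\calL^\eps v_b = L^1 + L^2 + L^3 + L^4$, the cubic-polynomial argument for $(L^1)'$, and the quadratic bounds $|\sigma(x)-\bar\sigma x| \le cx^2$, $|\mu(x)-\bar\mu|\le cx$ from (A1)) is devoted to showing that the violation on $(0,\al_b)$ is of size $\delta_b \to 0$ as $b\to\beta^\eps-$. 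Your proposal silently assumes this away: your $\delta_n$ only measures the drift in $\Leps$ at the free boundary, not the genuine HJB violation introduced by any modification near $0$, because you never modify the function there. In short, you have identified the correct perturbation of the boundary point but not the indispensable perturbation of the potential near the origin, and the two cannot be decoupled.

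A secondary point: you remark that Proposition \ref{prop:bounded} controls $\sigma\cdot(v^\eps)'$ so that $\Q^*$ and the stochastic integral are well-behaved. That is true for the threshold control in Proposition \ref{prop:lower}, but in the upper bound the paper instead works with the truncated $v_b$, whose derivative is bounded by construction on $(0,n]$, and localizes via $T_n$; the Novikov-type condition \eqref{eq:psi_cond} for $\Q^{v_b}$ is then immediate, with no need to invoke Proposition \ref{prop:bounded}.
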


\begin{remark}\label{rem:AH}
We note here that the proof in the $\eps=0$ case appearing in \cite{alv-hen2019} has a gap. This is because in \cite{alv-hen2019} the authors assumed that $v^0=v^{\eps=0}$ is bounded below, something which is not always true. Take, for example, the Verhulst--Pearl diffusion from Example \ref{ex:VP} with parameters $\mu = \sig = \gam = 1$. Then,  one can show that $(v^0)' (x)= c(e^{2x} -1)/x^2$,
which behaves like $x^{-1}$ in the neighborhood of $x=0+$. As a result, $v^0(x) \to -\iy$ as $x \to 0+$. Specifically, in Lemma 2.1 from \cite{alv-hen2019}, where the authors aimed at showing an upper bound as in \eqref{eq:upper:bound1}, they applied It\^o's rule to test functions (candidates for $v^0$) and in the proof they assumed that these functions are bounded from below. Then, they applied it in their Theorem 2.1 for $v^0$ (which is not always bounded below). We on the other hand consider bounded from below test functions, yet we allow them to slightly violate the second part of \eqref{HJB_cutoff}. The main difficult is to choose a proper sequence of functions such that in the limit the violation vanishes.
\end{remark}

\begin{proof}[Proof of Theorem \ref{thm:main}] The case $\eps=0$ is complete by replacing \cite[Lemma 2.1]{alv-hen2019} with Proposition \ref{prop:upper}. 
Hence, in the rest of the proof we fix an arbitrary $\eps>0$.

Proposition \eqref{prop:HJB} establishes bullet (1). 
From Proposition \ref{prop:upper}, we know $$\ell^\eps \ge\sup_{Z\in \calZ( x)}\;\inf_{ \Q\in \calQ( x)}\;J^\eps( x, Z, \Q).$$
On the other hand, from Proposition \ref{prop:lower}, if we take the $\beta^\eps$-threshold control, $Z^\eps$, we get
$$\ell^\eps=\inf_{\Q\in\calQ(x)}J^\eps(x,Z^\eps,\Q)
\leq \sup_{Z\in \calZ( x)}\;\inf_{ \Q\in \calQ( x)}\;J^\eps( x, Z, \Q).$$ As a consequence: $ V^\eps(x)\equiv \ell^\eps $ (establishing (3)) and $Z^\eps$ is an optimal control (establishing (2)). 


\end{proof}

\section{Proof of Propositions \ref{prop:HJB}, \ref{prop:bounded},  \ref{prop:lower}, and \ref{prop:upper}}\label{sec:4}

The proofs of Proposition \ref{prop:HJB} and \ref{prop:bounded} require some preliminary ODE results, provided in a sequence of lemmas below. On the other hand, the proof of Proposition \ref{prop:lower} merely requires the existence of a $\calC^2$ solution to \eqref{HJB_cutoff}, which we get thanks to Proposition \ref{prop:HJB}. For readability reasons we start with the proof of Proposition \ref{prop:lower}.

\subsection{Proof of Proposition \ref{prop:lower}.}\label{sec:41}
We provide the proof for the case $\eps>0$. The case $\eps=0$ is handled similarly and is simpler. Therefore, it is omitted. 
Denote by $Z=Z^{(\beta^\eps)}$ the $\beta^\eps$-threshold control. Choose an arbitrary admissible control $\Q$ with Girsanov kernel $\psi$. For every $n\in\N$, set $T^n=T\wedge\inf\{t\ge 0: X^Z_n\notin[1/n,n]\}$.
By It\^o's Lemma,
\begin{align*}
  v^\eps(X^Z_{T_n}) &= v^\eps(x) + \int_0^{T_n} \Big(\frac 12 \sigma^2(X^Z_s)({v^\eps})^{''}(X^Z_s)+\big[X^Z_s\mu(X^Z_s) + \sigma(X^Z_s)\psi_s]({v^\eps})'(X^Z_s)\Big)ds
 \\
 &\quad+ \int_0^{T_n} \sigma(X^Z_s)({v^\eps})'(X^Z_s)dW^\Q_s - \int_0^{T_n} ({v^\eps})'(X^Z_s)dZ_s.
\end{align*}
The function $v^\eps$ solves \eqref{HJB_cutoff}. From \eqref{operator} (which is valid for $x>0$) it follows that
\begin{align*}
&\frac 12 \sigma^2(X^Z_s)({v^\eps})^{''}(X^Z_s)+[X^Z_s\mu(X^Z_s) + \sigma(X^Z_s)\psi_s]({v^\eps})'(X^Z_s)+ \frac{1}{2\eps}\psi_s^2 \ge \ell^\eps.
\end{align*}
Hence,
\begin{align*}
  \int_0^{T^n} ({v^\eps})'(X^Z_s)dZ_s + \int_0^{T^n}  \frac{1}{2\eps}\psi_s^2ds \geq v^\eps(x) - v^\eps(X^Z_{T^n})+ \ell^\eps {T^n} + \int_0^{T^n} \sigma(X^Z_s)({v^\eps})'(X^Z_s)dW^\Q_s.
\end{align*}
Since $Z$ is a $\beta^\eps$-threshold control, $\int_0^\infty 1_{[0,\beta^\eps)}(X^Z_s)dZ_s = 0$, so ${v^\eps}'(X^Z_s) = 1 $ when $X^Z_s = \beta^\eps$ gives $\int_0^{T_n} {v^\eps}'(X^Z_s)dZ_s = \int_0^{T_n}dZ_s.$
Taking expectation with respect to $\Q$ and noting that by Proposition \ref{prop:bounded}
$$\E\left[\int_0^{T_n} \sigma(X^Z_s)({v^\eps})'(X^Z_s)dW^\Q_s\right]=0$$
imply that
$$
   \frac{1}{T_n}\mathbb E^{\Q} \Big[\int_0^{T_n}dZ_s + \int_0^{T_n} \frac{1}{2\eps}\psi_s^2ds\Big] \geq \frac{1}{T_n}\left(v^\eps(x) - \mathbb E^{\Q} [v^\eps(X^Z_T)]\right) + \ell^\eps \geq \frac{1}{T_n}\left(v^\eps(x)-v^\eps(\beta)\right) + \ell^\eps,
$$
where the last inequality follows by the monotonicity of $v^\eps$. Letting first $n\to\iy$ then by the admissibility of $\Q$ and the fact that $Z$ is a threshold policy, it follows that $\Q(X^Z_s>0,\;s\in[0,T])=1$ (see e.g., \cite[Section 2.6]{BS12}), which implies that $T^n\to T$, $\Q$-a.s. Then take $T \to \infty$ and get $J^\eps(x,Z,\Q) \geq \ell^\eps$. Since $\Q$ is arbitrary admissible, one has, $$\ell^\eps = \inf_{\Q \in\mathcal Q (x)}J^\eps(x,Z,\Q).$$
\qed

\vskip10pt

\subsection{Proof of Proposition \ref{prop:HJB}}
The proof uses the shooting method. This is a method that allows to solve boundary value problems by reducing them to initial value problems; see \cite{Stoer1980} for further reading. We adapt it to our free-boundary problem. In our case, we set up as a parameter the boundary point $\hat x=\beta^\eps$ such that \eqref{HJB_cutoff} holds true with it and such that it gives the maximal value $\ell^\eps$.

We start by setting up an ODE that stems from \eqref{HJB_cutoff}. Its role is described in two paragraphs ahead.  Fix $b>0$ and recall the function $\Leps$ from \eqref{l_eps_2}. 
Denote by $g_{b,\gamma}$ the $\calC^1((0,b)\cup(b,\iy))\cap \calC(0,\iy)$ solution of the following ODE.
\begin{align}\label{eq:gbgamma}
\begin{cases}
\frac 12\sigma^2(x) g'(x)+x\mu(x) g(x)-\frac{\eps}{2}\sigma^2(x)g^2(x)=\Leps(b)+\gamma,\qquad x\in(0,\infty),
\\
g(b)=1.
\end{cases}
\end{align}
If $\gamma=0$, we use the notation $g_b$ for $g_{b,0}$. Notice that $g_b$ is continuously differentiable at $x=b$. In the rest of the section we will make several uses of this ODE, which originates from \eqref{HJB_cutoff}. The existence and uniqueness of a solution to \eqref{eq:gbgamma} follows by the Cole--Hopf transformation. Indeed, the following ODE is linear, hence, admits a unique solution on any interval of the form $[a_1,a_2]$ with $a_1 < b < a_2$:
\begin{align*}
 \begin{cases}
\frac 12\sigma^2(x) \phi''(x)+x\mu(x) \phi'(x)=-(\Leps(b)+\gamma)\eps\phi(x),\qquad 
\\
\phi(b) = 1,\quad
\phi'(b) = -\eps.
\end{cases}
\end{align*}
Set $f:=\ln(\phi)/\eps$. Then, $f'$ solves \eqref{eq:gbgamma}. Uniqueness holds since the transformation is one to one.

We now motivate the analysis of the system \eqref{eq:gbgamma}. Targeting at proving Proposition \ref{prop:HJB}, we aim at showing that there is $\beta^\eps\in(x^\eps,\bar x^\eps)$ for which \eqref{HJB_cutoff} holds.
There are four conditions embedded in \eqref{HJB_cutoff}.
Set $f(x)=\int_b^xg_b(y)dy$ (so $f'(x)=g_b(x)$) on $(0,b)$. At this point the reader may see that the ODE for $f$ satisfies $\calL^\eps f(x)=\Leps(b)$  to the left of $b$. By Assumption \ref{assumptions:main}, for $b > x^\eps$, $\Leps(x) \leq \Leps(b)$. Setting up $f'(x)=1$ on $(b,\iy)$ and the second line of \eqref{HJB_cutoff} holds for any $b>x^\eps$. This suggests that most of the effort should be invested in choosing a point $\beta^\eps=b$ for which the challenging bound $g_b(x)\ge 1$ holds on $(0,b)$. Of course, one also needs to choose the $b$ leading to the maximal payoff. Using the fact that $\Leps$ is decreasing to the right of $x^\eps$, we are looking for a minimal $b$ with the property mentioned above. Finally, some effort is required to show that  the infimum over a relevant collection of functions $\{g_b(x)\}_{b\in A}$ satisfies \eqref{eq:gbgamma}.


The following elementary lemma will be used several times in the sequel.
\begin{lemma}\label{lemma:deriv}
Let $f$ be a $\calC ^1$ function defined on $(a,b)$. Suppose at $x \in (a,b)$, $f(x) > c$ $($resp., $<c)$ for some $c \in \R$. Then upon existence of $y_1 := \sup\{y \in (a,x): f(y) = c\}$ and $y_2 := \inf\{y \in (x, b): f(y) = c\}$,  $f'(y_1) \geq 0$ $($resp., $\leq 0)$, $f'(y_2) \leq 0$ $($resp., $\geq 0)$.

As a corollary, let $f$ be a $\calC^1$ function defined on $(a,b)$. Fix $x \in (a,b)$ and let $y_1 := \sup\{y \in (a,x): f(y) = f(x)\}$ and $y_2 := \inf\{y \in (x, b): f(y) = f(x)\}$, if they exist. If $f'(x) > 0$ $($resp., $<0)$, then, $f'(y_1) \leq 0$ $($resp., $\geq 0)$ and $f'(y_2) \leq 0$ $($resp., $\geq 0)$.

\end{lemma}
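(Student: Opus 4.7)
The plan is to deduce the sign of the one-sided derivatives at $y_1$ and $y_2$ by combining continuity of $f$ with the intermediate value theorem to pin down the sign of $f-c$ (or of $f-f(x)$ in the corollary) on a suitable one-sided neighborhood of each point, and then reading off that sign from the appropriate one-sided difference quotient.

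For the first assertion, I would treat the case $f(x) > c$ in detail (the case $f(x) < c$ being entirely symmetric). Whenever $y_1 \in (a,x)$ exists, continuity gives $f(y_1) = c$ (take any sequence in $\{y \in (a,x) : f(y) = c\}$ converging up to $y_1$ and pass to the limit). Next, I would show that $f > c$ on the whole interval $(y_1, x]$: otherwise there would exist $z \in (y_1, x)$ with $f(z) \leq c$, and applying the intermediate value theorem on $[z,x]$ (using $f(x) > c$) would produce a point $w \in [z, x)$ with $f(w) = c$, contradicting the supremum characterization of $y_1$. Consequently, for all sufficiently small $h > 0$,
\[
\frac{f(y_1+h) - f(y_1)}{h} \;=\; \frac{f(y_1+h) - c}{h} \;>\; 0,
\]
and letting $h \to 0+$ yields $f'(y_1) \geq 0$. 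The argument at $y_2$ is the mirror image, using a left-sided difference quotient and the fact that $f > c$ on $[x, y_2)$.

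For the corollary, the hypothesis $f'(x) > 0$ together with the definition of the derivative gives $\delta > 0$ such that $f(y) < f(x)$ on $(x - \delta, x)$ and $f(y) > f(x)$ on $(x, x + \delta)$. The same intermediate value argument as above then promotes these strict inequalities to all of $(y_1, x)$ and $(x, y_2)$ respectively: a sign flip on either interval would force a root of $f - f(x)$ strictly between $y_1$ and $x$ (or between $x$ and $y_2$), contradicting the extremal definition of $y_1$ or $y_2$. Extracting one-sided derivatives at $y_1$ from the right and at $y_2$ from the left then gives $f'(y_1) \leq 0$ and $f'(y_2) \leq 0$. The case $f'(x) < 0$ is symmetric.

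There is no substantial obstacle here, since the lemma is elementary; the only point that requires care is to use the supremum/infimum characterization of $y_1, y_2$ precisely to rule out sign flips of $f-c$ (respectively $f - f(x)$) on the open intervals $(y_1, x)$ and $(x, y_2)$, so that the one-sided difference quotients keep a definite sign throughout a one-sided neighborhood.
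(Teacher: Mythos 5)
Your proof is correct, and since the paper states this lemma without proof (calling it ``elementary''), it is supplying exactly the sort of argument the authors had in mind: pin down the sign of $f-c$ (or $f - f(x)$) on a one-sided neighborhood of $y_1$, $y_2$ via the sup/inf characterization and the intermediate value theorem, and then read off the sign of the one-sided difference quotient. No discrepancy to report.
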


The following lemma is a perturbation result, we use it to get estimations for $g_b$ via estimates of $g_{b,\gamma}$, which are often easier to achieve.
\begin{lemma}\label{lemma:perturb}
For any $b > 0$ and any $y\in(0,b)$, we have $$\sup_{x\in[y,b]}|g_{b,\gamma}(x)- g_{b}(x)|=O(\gamma) \quad\text{as}\quad \gamma\to 0.$$
\end{lemma}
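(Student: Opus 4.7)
My plan is to derive a linear first-order ODE for the difference $h(x) := g_{b,\gamma}(x)-g_b(x)$ and then, via an integrating factor together with the boundary condition $h(b)=0$, read off that $|h|\le C\gamma$ on $[y,b]$. Subtracting the two instances of \eqref{eq:gbgamma} and factoring $g_{b,\gamma}^2-g_b^2 = h\,(g_{b,\gamma}+g_b)$, one obtains
\[
\tfrac12 \sig^2(x) h'(x) + \Bigl[x\mu(x) - \tfrac{\eps}{2}\sig^2(x)\bigl(g_{b,\gamma}(x)+g_b(x)\bigr)\Bigr] h(x) = \gamma, \qquad h(b)=0,
\]
which is linear in $h$ with constant forcing of size $\gamma$.

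Before solving this equation I first need to verify that $g_{b,\gamma}$ is actually defined, and uniformly bounded, on $[y,b]$ for all $\gamma$ near $0$; a priori the Riccati-type equation \eqref{eq:gbgamma} could blow up. For this I will lean on the Cole--Hopf transformation introduced right after \eqref{eq:gbgamma}: the associated linear second-order ODE for $\phi_\gamma$ has coefficients that depend affinely on $\gamma$, so by classical continuous dependence of linear-ODE solutions on parameters, $\phi_\gamma\to \phi_0$ and $\phi_\gamma'\to\phi_0'$ uniformly on $[y,b]$ as $\gamma\to 0$. Since $\phi_0(b)=1>0$ and $[y,b]$ is compact, $\phi_0$ is bounded away from $0$ there; hence so is $\phi_\gamma$ for every sufficiently small $\gamma$. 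Consequently $g_{b,\gamma} = \phi_\gamma'/(\eps\phi_\gamma)$ exists on $[y,b]$ and, together with $g_b$, is uniformly bounded there, independent of small $\gamma$.

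With that uniform bound in hand, I divide the ODE for $h$ by $\tfrac12\sig^2(x)$ and write
\[
h'(x) + a(x) h(x) = \frac{2\gamma}{\sig^2(x)}, \qquad a(x) := \frac{2x\mu(x)}{\sig^2(x)} - \eps\bigl(g_{b,\gamma}(x)+g_b(x)\bigr).
\]
By Assumption (A1), $\sig$ is continuous and strictly positive on the compact interval $[y,b]$, so $\sig^2$ is bounded away from $0$ there, and $a$ is uniformly bounded on $[y,b]$ uniformly in small $\gamma$. Setting $A(x)=\int_b^x a(s)\,ds$ (so $A(b)=0$) and using the integrating factor $e^{A(x)}$ yields the closed-form representation
\[
h(x) = -2\gamma\, e^{-A(x)}\int_x^b \frac{e^{A(s)}}{\sig^2(s)}\,ds,
\]
and since both $A$ and $1/\sig^2$ are bounded on $[y,b]$, this gives $\sup_{x\in[y,b]}|h(x)|\le C\gamma$ with $C$ independent of small $\gamma$, as required.

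The main obstacle is the uniform boundedness step: without it, both the coefficient $a$ in the linear equation and even the mere existence of $g_{b,\gamma}$ on all of $[y,b]$ are in question. The Cole--Hopf linearization sidesteps this cleanly, reducing the issue to the simple observation that $\phi_0$ stays away from $0$ on the compact interval $[y,b]$, which is automatic from $\phi_0(b)=1$ and continuity.
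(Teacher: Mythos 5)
Your proof is correct, and it takes a genuinely different route from the paper's. The paper introduces the right-hand-side function $H(x,y):=\frac{2}{\sig^2(x)}\bigl(\Leps(b)-x\mu(x)y+\frac{\eps}{2}\sig^2(x)y^2\bigr)$, truncates it in the second variable (using a bound $K$ for $g_b$ on $[y,b]$) so that the truncated $H_k$ is Lipschitz, and then cites a standard perturbation theorem for ODEs with Lipschitz right-hand sides to get $O(\gam)$ at once. You instead subtract the two Riccati equations, notice the difference $h=g_{b,\gam}-g_b$ solves a genuinely linear first-order equation with forcing $\gam$, and solve it in closed form with an integrating factor. The crux in both approaches is the same hidden issue: one must know that $g_{b,\gam}$ exists and stays uniformly bounded on $[y,b]$ for all small $\gam$, otherwise neither the Lipschitz constant nor the coefficient $a(x)$ in your linear equation is under control. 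The paper handles this rather tersely (asserting that $g_{b,\gam}$ satisfies the truncated equation, which implicitly presupposes $g_{b,\gam}\in[-K,K]$), whereas you handle it head-on via the Cole--Hopf linearization and continuous dependence of $\phi_\gam$, $\phi_\gam'$ on the parameter $\gam$. That is the cleaner treatment of the existence/boundedness concern, and your integrating-factor formula makes the $O(\gam)$ constant explicit. One small point to tighten: the positivity of $\phi_0$ on all of $[y,b]$ does not follow from $\phi_0(b)=1$ alone; it follows because $g_b$ is assumed well-defined on $(0,b]$, which forces $\phi_0$ to be nonvanishing there and hence (by continuity and $\phi_0(b)=1$) strictly positive. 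Also note that the Cole--Hopf relation should read $g_{b,\gam}=-\phi_\gam'/(\eps\phi_\gam)$ so as to give $g_{b,\gam}(b)=1$ from $\phi_\gam(b)=1,\ \phi_\gam'(b)=-\eps$; this is a sign typo inherited from the paper and does not affect the argument.
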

\begin{proof}[Proof of Lemma \ref{lemma:perturb}]
Fix $b > 0$. Set $H: (0,\infty) \times \R \to \R$, by $H(x,y) := \frac{2}{\sig^2(x)}(\Leps(b)-x\mu(x)y+\frac{\eps}{2}\sigma^2(x)y^2)$, and notice that in the case $\gam = 0$, the ODE \eqref{eq:gbgamma} can be rewritten as $g_{b}'(x) = H(x, g_{b}(x))$. Had $H$ been Lipschitz-continuous in its second argument, standard perturbation theory implies that the solutions to the perturbed ODEs, $g_{b,\gam}'(x) = H(x, g_{b,\gam}(x)) + \frac{\gam}{\sig^2(x)}$, converge to $g_b$ uniformly on sets of the form $[y, b]$, $y \in (0, b)$. To this end, fix $y \in (0, b)$. Notice that for fixed $b \in (0,\infty)$, $g_b$ is bounded on $[y, b]$, say by $K = K(y, b)$. Define $k(x) = (-K \vee x)\wedge K$. Then $g_b(x) = k(g_b(x))$ for $x \in (y, b]$. Hence, in case $\gam = 0$, \eqref{eq:gbgamma} is equivalent to $g_b'(x) = H_k(x, g_b(x))$, where $H_k(x,y) = H(x,k(y))$. The function $H_k$ is Lipschitz-continuous in its second argument. Then, since we have $g_{b,\gam}'(x) = H_k(x, g_{b,\gam}(x)) + \frac{\gam}{\sig^2(x)}$, by \cite[Theorem 9.1]{DifEqn} we get $\sup_{x \in [y,b]}|g_{b, \gam}(x) - g_b(x)| = O(\gam)$.
\end{proof}

Set
\begin{align}\label{def:beta}
B^\eps := \{b> 0: g_b(x) \geq 1, x \in (0,b]\}\qquad\text{and define}\qquad\beta^\eps := \inf B^\eps.
\end{align}
%
The following lemma implies that the infimum is taken over a non-empty set. It also provides a region for $\beta^\eps$.

\begin{lemma}\label{lemma:brange}
The following relations hold, $(0,x^\eps]\cap B^\eps=\emptyset$, but $\bar x^\eps \in B^\eps$. As a conclusion, $\beta^\eps$ is well-defined and $x^\eps \leq \beta^\eps \leq \bar x^\eps$.
\end{lemma}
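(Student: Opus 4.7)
The plan is to treat the two set-membership claims separately; the inequality $x^\eps \le \beta^\eps \le \bar x^\eps$ then follows at once from $\beta^\eps = \inf B^\eps$.

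For the inclusion $\bar x^\eps \in B^\eps$, the key observation is that $\Leps(\bar x^\eps)=0$ by definition of $\bar x^\eps$, so the Riccati equation \eqref{eq:gbgamma} (with $\gamma=0$) for $g_{\bar x^\eps}$ reduces to a Bernoulli equation. The substitution $h := 1/g_{\bar x^\eps}$ linearizes it, and the density of the scale function $S_\PP'$ from \eqref{scale} serves as the natural integrating factor, producing an explicit closed-form expression for $g_{\bar x^\eps}$. The desired inequality $g_{\bar x^\eps}(x) \ge 1$ on $(0, \bar x^\eps]$ then reduces to the non-negativity of
\[
R(x) := S_\PP'(x) - S_\PP'(\bar x^\eps) - \eps \int_x^{\bar x^\eps} S_\PP'(y)\,dy, \qquad x \in (0, \bar x^\eps].
\]
Using the identity $(S_\PP')'(x) = -\tfrac{2x\mu(x)}{\sigma^2(x)} S_\PP'(x)$ yields $R'(x) = -\tfrac{2 S_\PP'(x)}{\sigma^2(x)} \Leps(x)$, which is strictly negative on $(0, \bar x^\eps)$ because $\Leps > 0$ there by definition of $\bar x^\eps$ and Assumption (A2). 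Combined with $R(\bar x^\eps) = 0$, this gives $R > 0$ on $(0, \bar x^\eps)$, which translates back to $g_{\bar x^\eps} \ge 1$.

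For the disjointness $(0, x^\eps] \cap B^\eps = \emptyset$, I would fix $b \in (0, x^\eps]$ and first evaluate \eqref{eq:gbgamma} at $x = b$ with $g_b(b) = 1$ to obtain $g_b'(b) = 0$. Differentiating the equation once and evaluating again at $b$ produces the clean identity
\[
g_b''(b) = -\frac{2(\Leps)'(b)}{\sigma^2(b)}.
\]
For $b \in (0, x^\eps)$, Assumption (A2) gives $(\Leps)'(b) > 0$, so $g_b''(b) < 0$; together with $g_b'(b) = 0$ this makes $b$ a strict local maximum of $g_b$, forcing $g_b(x) < 1$ for $x$ slightly less than $b$, and hence $b \notin B^\eps$. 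For the endpoint $b = x^\eps$ one has $(\Leps)'(b) = 0$ and so $g_b''(b) = 0$, which kills the second-order argument; I would then iterate the differentiation of the ODE and show inductively that whenever $g_b^{(j)}(b) = 0$ for $j = 1, \ldots, k$, one has $g_b^{(k+1)}(b) = -2 (\Leps)^{(k)}(b)/\sigma^2(b)$. Taking $k$ to be the order of the first non-vanishing derivative of $\Leps$ at $x^\eps$, the fact that $x^\eps$ is a strict maximum of $\Leps$ forces $k$ even with $(\Leps)^{(k)}(x^\eps) < 0$, hence $g_b^{(k+1)}(b) > 0$ with $k+1$ odd, and a Taylor expansion yields $g_b(x) < 1$ for $x$ slightly less than $b$.

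The main obstacle will be the endpoint case $b = x^\eps$ just sketched: because $g_b''(b)$ vanishes there, the clean convexity argument available for $b < x^\eps$ collapses, and one has to track higher-order derivatives whose signs depend on the non-degeneracy of $\Leps$ at its maximum. In the Verhulst--Pearl setting of Example \ref{ex:VP} one has $(\Leps)''(x^\eps) < 0$, so $k = 2$ and the third-order term suffices; analogous non-degeneracy is expected in the typical applications covered by Assumption (A2).
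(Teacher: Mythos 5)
Your argument for $\bar x^\eps \in B^\eps$ is a genuinely different and rather elegant route: exploiting $\Leps(\bar x^\eps)=0$ to turn the Riccati equation into a Bernoulli equation, linearizing with $h=1/g_{\bar x^\eps}$, and integrating against the scale density $S_\PP'$. I checked the computation: one indeed gets $S_\PP'(x)h(x)=S_\PP'(\bar x^\eps)+\eps\int_x^{\bar x^\eps}S_\PP'(y)\,dy$, the reduction to $R\ge 0$ is correct, and $R'=-\tfrac{2S_\PP'}{\sigma^2}\Leps<0$ on $(0,\bar x^\eps)$ together with $R(\bar x^\eps)=0$ closes that half. The paper instead proves both halves uniformly by a perturbation: it works with $g_{b,\gamma}$ for $\gamma\neq 0$, obtains a nonzero first derivative at $b$ from the $\gamma$-term, runs a first-crossing contradiction via Lemma~\ref{lemma:deriv}, and then lets $\gamma\to 0$ via Lemma~\ref{lemma:perturb}. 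Your explicit-integration route is cleaner for $\bar x^\eps$ but is special to the case $\Leps(b)=0$ and does not transfer to the other half.

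The gap is in your argument that $(0,x^\eps]\cap B^\eps=\emptyset$. Your second-derivative identity $g_b''(b)=-2(\Leps)'(b)/\sigma^2(b)$ is correct, but the sign you want, $(\Leps)'(b)>0$ for $b<x^\eps$, is not guaranteed by Assumption~(A2): ``increasing'' allows $(\Leps)'$ to vanish at interior points, and then the second-order argument is silent there. More seriously, your treatment of the endpoint $b=x^\eps$ needs $g_b^{(k+1)}(b)=-2(\Leps)^{(k)}(b)/\sigma^2(b)$ for some $k\ge 2$, which requires $\Leps$ (hence $\mu,\sigma$) to admit derivatives beyond first order; Assumption~(A1) only gives $\mu,\sigma\in\calC^1$, so $\Leps\in\calC^1$ and $g_b\in\calC^2$, and the higher derivatives you invoke need not exist. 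You acknowledge this and appeal to ``non-degeneracy expected in typical applications,'' but the lemma is stated for all $\mu,\sigma$ satisfying (A1)--(A2), so this is an unfilled hole. The paper's $\gamma$-perturbation is exactly the device that avoids this: taking $\gamma>0$ forces $\tfrac12\sigma^2(b)g_{b,\gamma}'(b)=\gamma>0$ regardless of whether $(\Leps)'(b)$ vanishes, the first-crossing argument via Lemma~\ref{lemma:deriv} then works pointwise without any extra smoothness, and the $\gamma\to 0+$ limit is supplied by Lemma~\ref{lemma:perturb}. To repair your proof you would either need to strengthen (A1) to higher regularity and non-degeneracy of $\Leps$ at $x^\eps$, or replace the Taylor-expansion step by a perturbation as in the paper.
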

\begin{proof}[Proof of Lemma \ref{lemma:brange}]
First, fix $b \leq x^\eps$ and $\gam > 0$. Recall the definition of $g_{b,\gam}$, given in \eqref{eq:gbgamma}. Plug in $x=b$ into the ODE for $g_{b,\gamma}$  and use $g_{b,\gam}(b)=1$ and the value of $\la^\eps(b)$, note that $\frac 12\sigma^2(x) g_{b,\gam}'(b)=\gamma$. Therefore $g_{b,\gam}'(b) > 0$. We show that for any $x \in (0,b)$ one has $g_{b,\gam}(x)<1$. Arguing by contradiction, assume it does not hold, then together with $g_{b, \gam}(b) = 1$, the following supremum is attained $x_1: = \sup\{x \in (0, b) : g_{b,\gam}(x) = 1\} $. Then, $$\frac 12\sigma^2(x_1) g_{b,\gam}'(x_1)=\ell_{b} - (x_1\mu(x_1) -\frac{\eps}{2}\sigma^2(x_1))+\gamma = \Leps(b) - \Leps(x_1) + \gam > 0.$$ The second equality follows by the definition of $\Leps$ and the inequality follows by Assumption \ref{assumptions:main} (A2) together with $x_1 < b \leq x^\eps$ and $\gam > 0$. Hence, $g_{b,\gam}'(x_1) > 0$, which contradicts Lemma \ref{lemma:deriv}.

What we have got so far is that in the case $b \le x^\eps$, for any $\gam >0$ one has $g_{b,\gam}(x) < 1$ for $x \in (0,b)$. Then by Lemma \ref{lemma:perturb}, $g_{b,\gam}(x) \to g_b(x)$ as $\gam \to 0+$ for fixed $x \in (0,b)$, therefore, $g_b(x) \leq 1$ for $x \in (0,b)$. However, the structure of the ODE tells us that it can not be the case $g_b(x) = 1$ for all $x \in (0,b)$, therefore, $b \notin B^\eps$.

The case $b = \bar x^\eps$ is similar (we pick $\gam < 0$ instead  of $\gam > 0$), and is therefore omitted.
\end{proof}

Now if the point $\beta^\eps$ is isolated in the set $B^\eps$, the proof of Proposition \ref{prop:HJB} is done, since the result holds with $\beta^\eps$. In case $\beta^\eps$ is an accumulation point, we show the pointwise convergence of $g_{b_i}(x)$ for any fixed $x\in (0,b)$ as $b_i \to \beta^\eps+$, which in turn implies that  $\beta^\eps \in B^\eps$. The following four Lemmas mainly serve this role.

\begin{lemma}\label{lemma:passboundary}
For any $b \geq x^\eps$, and $x > b$ we have $g_b(x) \geq 1$.
\end{lemma}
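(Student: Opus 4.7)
The plan is a contradiction argument based on a careful study of $g_b$ near the initial point $b$ together with a crossing argument to the right. First I will evaluate the ODE \eqref{eq:gbgamma} at $x=b$ using $g_b(b)=1$; since $\Leps(b)=b\mu(b)-\tfrac{\eps}{2}\sigma^2(b)$, the non-derivative terms cancel on both sides and I obtain $g_b'(b)=0$. Differentiating \eqref{eq:gbgamma} once more and evaluating at $x=b$, every term carrying the factor $g_b'(b)$ drops out, leaving the clean identity
\begin{equation*}
\tfrac{1}{2}\sigma^2(b)\,g_b''(b)=-\Leps'(b).
\end{equation*}

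Next I want to show that $g_b(x)>1$ immediately to the right of $b$. When $b>x^\eps$, Assumption (A2) (plus smoothness of $\Leps$) gives $\Leps'(b)<0$, so $g_b''(b)>0$. Combined with $g_b(b)=1$ and $g_b'(b)=0$, a second-order Taylor expansion yields $g_b(x)>1$ on some right neighborhood $(b,b+\delta)$. The borderline case $b=x^\eps$ is the main obstacle, because then $\Leps'(x^\eps)=0$ forces $g_b''(b)=0$ and the Taylor argument degenerates. My plan is to handle this case by continuous dependence on the initial point: taking a sequence $b_n\downarrow x^\eps$ with $b_n>x^\eps$, the preceding paragraph gives $g_{b_n}(x)\ge 1$ on $(b_n,\infty)$; a standard ODE perturbation result (in the same spirit as Lemma \ref{lemma:perturb}, but now varying the pair $(b,\Leps(b))$ rather than $\gamma$, which is legitimate since the ODE is locally Lipschitz in the unknown away from blow-up) gives $g_{b_n}(x)\to g_{x^\eps}(x)$ pointwise on $(x^\eps,\infty)$, so $g_{x^\eps}(x)\ge 1$ there as well.

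With $g_b>1$ in some right neighborhood of $b$ in hand, the crossing argument closes the proof. Assume for contradiction that $g_b(x_\ast)<1$ for some $x_\ast>b$; then $x_1:=\inf\{x\in(b,x_\ast]:g_b(x)\le 1\}$ satisfies $x_1>b$, $g_b(x_1)=1$, $g_b\ge 1$ on $[b,x_1]$, and $g_b$ drops strictly below $1$ immediately after $x_1$, forcing $g_b'(x_1)\le 0$ by Lemma \ref{lemma:deriv}. But plugging $g_b(x_1)=1$ into the ODE produces
\begin{equation*}
g_b'(x_1)=\frac{2}{\sigma^2(x_1)}\bigl(\Leps(b)-\Leps(x_1)\bigr)>0,
\end{equation*}
since $x_1>b\ge x^\eps$ and $\Leps$ is strictly decreasing on $(x^\eps,\infty)$. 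This contradiction yields $g_b(x)\ge 1$ for all $x>b$, completing the proof.
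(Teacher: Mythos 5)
Your crossing argument in the final paragraph is correct and mirrors the paper's contradiction via Lemma \ref{lemma:deriv}, but the way you bootstrap the initial inequality $g_b>1$ on a right neighborhood of $b$ has two gaps that the paper's $\gamma$-perturbation is designed to avoid.

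First, the second-order Taylor step. You correctly compute $g_b'(b)=0$ and $\tfrac12\sigma^2(b)g_b''(b)=-\Leps'(b)$, and then argue that for $b>x^\eps$ Assumption (A2) gives $\Leps'(b)<0$. But (A2) only says that $\Leps$ is decreasing on $(x^\eps,\infty)$; for a $\calC^1$ strictly decreasing function this gives $\Leps'\le 0$, not $\Leps'<0$, and $\Leps'$ may vanish at isolated points $b>x^\eps$. At such a $b$ your Taylor argument degenerates exactly as it does at $b=x^\eps$, and the quadratic comparison does not by itself give $g_b>1$ to the right of $b$.

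Second, the limiting argument for $b=x^\eps$. You appeal to a continuous-dependence result in the initial point $b$ ``in the spirit of'' Lemma \ref{lemma:perturb}, but Lemma \ref{lemma:perturb} only perturbs the constant $\gamma$ (not $b$) and only gives uniform convergence on intervals $[y,b]$ \emph{to the left} of $b$. The paper does prove continuity in $b$ later (Lemma \ref{lemma:continuity}), but that lemma again only covers $y<b$, and more importantly its proof \emph{uses} the present Lemma \ref{lemma:passboundary} (through Lemmas \ref{lemma:geqone} and \ref{lemma:comparison}), so invoking anything of that flavor here is circular unless you prove a fresh, right-sided continuous-dependence statement from scratch.

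The paper sidesteps both problems by working with $g_{b,\gamma}$ for $\gamma>0$: plugging $x=b$ into \eqref{eq:gbgamma} gives $\tfrac12\sigma^2(b)g_{b,\gamma}'(b)=\gamma>0$ for every $b\ge x^\eps$, so the function is strictly increasing at $b$ with no appeal to $\Leps'$, and the same crossing argument (at $x_5:=\inf\{x>b:g_{b,\gamma}(x)=1\}$, where $\tfrac12\sigma^2(x_5)g_{b,\gamma}'(x_5)=\Leps(b)-\Leps(x_5)+\gamma>0$) gives $g_{b,\gamma}>1$ on $(b,\infty)$; sending $\gamma\to 0+$ then yields $g_b\ge 1$. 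Adopting the $\gamma$-perturbation rather than the Taylor-plus-limit-in-$b$ route removes both gaps and treats $b=x^\eps$ and $b>x^\eps$ uniformly.
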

\begin{proof}[Proof of Lemma \ref{lemma:passboundary}]
As in the previous proof, plug in $x=b$ into the ODE for $g_{b,\gamma}$ and use $g_{b,\gam}(b)=1$ and the value of $\la^\eps(b)$ to get $\frac 12\sigma^2(b) g_{b,\gam}'(b)=\gamma$. Therefore $g_{b,\gam}'(b) > 0$. We show that for any $x \in (b,\infty)$ one has $g_{b,\gam}(x) > 1$. Lemma \ref{lemma:perturb} implies that $g_{b}(x) \ge 1$. Arguing by contradiction, assume it does not hold. Set $x_5 := \inf\{x \in (b, \infty): g_{b,\gam}(x) = 1\}$. Then $$\frac 12\sigma^2(x_5) g_{b,\gam}'(x_5)=\Leps(b) - \Leps(x_5)+\gamma > 0,$$ where the inequality follows since,  $\gamma > 0$ and $\Leps(b) - \Leps(x_5) >0$ because $x^\eps \leq b < x_5$ -- see Assumption \ref{assumptions:main} (A2). This implies $g_{b,\gam}'(x_5) > 0$, which contradicts Lemma \ref{lemma:deriv}.
\end{proof}

\begin{lemma}\label{lemma:geqone}
For any $ b > x^\eps$, there exists $y_2 \in (0,x^\eps)$, such that $g_b(x)\ge 1$ for $x\in(y_2,b]$
\end{lemma}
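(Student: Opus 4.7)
The plan is to first show that $g_b$ has a strict local minimum at $x=b$ with value $1$, and then to propagate the bound $g_b\ge 1$ leftward past $x^\eps$ by contradiction on \eqref{eq:gbgamma} in the same spirit as Lemmas \ref{lemma:brange} and \ref{lemma:passboundary}. Evaluating \eqref{eq:gbgamma} at $x=b$ with $\gam=0$, $g_b(b)=1$ collapses via the definition of $\Leps$ to $g_b'(b)=0$. Differentiating \eqref{eq:gbgamma} once more and then substituting $g_b(b)=1$, $g_b'(b)=0$ annihilates every term except those yielding
\[
g_b''(b) \;=\; -\frac{2\Leps'(b)}{\sig^2(b)}.
\]
Since $b>x^\eps$, Assumption \ref{assumptions:main} (A2) gives $\Leps'(b)<0$, so $g_b''(b)>0$ and $g_b$ attains a strict local minimum at $b$. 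Hence there is $\delta_0>0$ with $g_b(x)>1$ on $(b-\delta_0,b)$.

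Next I claim $g_b\ge 1$ on $(x^\eps,b]$. Suppose instead that some $x_0\in(x^\eps,b-\delta_0)$ satisfies $g_b(x_0)<1$. Because $g_b(b-\delta_0/2)>1>g_b(x_0)$, the intermediate value theorem ensures that $y_1:=\sup\{y\in(x_0,b-\delta_0/2):g_b(y)=1\}$ is well-defined with $g_b(y_1)=1$, and Lemma \ref{lemma:deriv} gives $g_b'(y_1)\ge 0$. But substituting $g_b(y_1)=1$ into \eqref{eq:gbgamma} forces $g_b'(y_1)=2(\Leps(b)-\Leps(y_1))/\sig^2(y_1)<0$, since (A2) makes $\Leps$ strictly decreasing on $(x^\eps,\infty)$ and $y_1\in(x^\eps,b)$; contradiction. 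The same ODE evaluation at $x=x^\eps$ upgrades this to $g_b(x^\eps)>1$, because equality would give $g_b'(x^\eps)<0$ and hence $g_b<1$ immediately to the right of $x^\eps$, contradicting what was just proved. Continuity of $g_b$ then supplies $\delta_1\in(0,x^\eps)$ with $g_b>1$ on $[x^\eps-\delta_1,x^\eps]$, and setting $y_2:=x^\eps-\delta_1/2\in(0,x^\eps)$ yields $g_b\ge 1$ on $(y_2,b]$.

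The main obstacle is getting off the ground at $x=b$: the boundary condition $g_b(b)=1$ combined with \eqref{eq:gbgamma} forces $g_b'(b)=0$, so the first-order behavior at $b$ is inconclusive and the sign of the curvature must be extracted from a once-differentiated form of the ODE. This is precisely where the hypothesis $b>x^\eps$ enters, via $\Leps'(b)<0$. Once $g_b>1$ is secured on a short left-neighborhood of $b$, the ``wrong sign of $g_b'$ at every crossing of the level $1$'' contradiction that pervades Section \ref{sec:4} carries the bound past $x^\eps$ without further difficulty.
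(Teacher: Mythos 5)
Your proof takes a genuinely different route from the paper's, and the difference lies exactly at the point you identify as ``the main obstacle'': how to get off the ground at $x=b$, where the ODE forces $g_b'(b)=0$ and first-order information is inconclusive. The paper resolves this by perturbing the right-hand side of \eqref{eq:gbgamma} by $\gamma<0$, which gives $g_{b,\gamma}'(b)=2\gamma/\sigma^2(b)<0$ \emph{strictly}; the contradiction via Lemma \ref{lemma:deriv} then runs cleanly on $(y_2,b)$ for a single well-chosen $y_2<x^\eps$ (the solution of $\Leps(y_2)=\Leps(b)$), and the perturbation is removed at the end via Lemma \ref{lemma:perturb}. You instead work with $\gamma=0$ directly, differentiate the ODE, and compute $g_b''(b)=-2\Leps'(b)/\sigma^2(b)$, claiming this is strictly positive.

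That step is where the gap is. Assumption (A2) only says that $\Leps$ is decreasing on $(x^\eps,\infty)$; it does \emph{not} guarantee $\Leps'(b)<0$ at every $b>x^\eps$. A strictly decreasing $\calC^1$ function can have a vanishing derivative at isolated points (locally like $-(x-b)^3$), in which case $g_b''(b)=0$ and the claim that $b$ is a strict local minimum — and hence that $g_b>1$ on $(b-\delta_0,b)$ — is not secured. Without that left-neighborhood anchor, the intermediate-value/supremum step that drives the contradiction has no starting point, and the whole propagation collapses. To make your argument airtight you would need to additionally assume $\Leps'<0$ on $(x^\eps,\infty)$, or push the Taylor expansion to whatever order first gives a nonzero derivative and argue sign from there (which then needs additional regularity). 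The paper's $\gamma$-perturbation avoids all of this by operating at first order rather than second, which is precisely why the preliminary Lemma \ref{lemma:perturb} was introduced.

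The rest of your argument — the leftward propagation on $(x^\eps,b]$, the strict inequality $g_b(x^\eps)>1$ obtained by evaluating the ODE at $x^\eps$, and the short continuity step that produces $y_2=x^\eps-\delta_1/2\in(0,x^\eps)$ — is correct. It is organized somewhat differently from the paper (two stages instead of one), but both produce a valid $y_2$. The fix I would suggest is to replace your second-derivative step by exactly the paper's $\gamma<0$ trick and keep the remainder of your propagation if you prefer its two-stage structure.
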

\begin{proof}[Proof of Lemma \ref{lemma:geqone}]

Fix $b > x^\eps$ and $\gamma<0$.
The definition of $\Leps(b)$ implies $g_{b,\gam}'(b) < 0$. Let $y_2 < x^\eps$ be such that $\Leps(y_2) = \Leps(b)$. Such $y_2$ exists by Assumption \ref{assumptions:main} (A2). We show that for any $x \in (y_2, b)$, $g_{b,\gam}(x)>1$. Arguing by contradiction, assume it does not hold. Set $x_3 = \sup\{x \in (y_2, b): g_{b,\gam}(x) = 1\}$. Then, $$\frac 12\sigma^2(x_3) g_{b,\gam}'(x_3)=\Leps(b) - \Leps(x_3)+\gamma < 0,$$ where the inequality follows since $\gamma < 0$ and $\Leps(b) < \Leps(x_3)$ because $y_2 < x_3 < b$ and by Assumption \ref{assumptions:main} (A2). That is, $g_{b,\gam}'(x_3) < 0$, which contradicts Lemma \ref{lemma:deriv}. Hence $g_{b,\gam}(x)>1$ for $x \in (y_2, b)$.

By Lemma \ref{lemma:perturb}, taking $\gam \to 0-$, we get for $x \in (y_2, b)$, $g_b(x) \geq 1$.
\end{proof}

\begin{lemma}\label{lemma:comparison}
Let $b > a \geq x^\eps$. For $ x \in (0,a)$, one has $g_a(x) \leq g_b(x)$ and for $ x \in (b, \infty)$ one has $g_a(x) \geq g_b(x)$.
\end{lemma}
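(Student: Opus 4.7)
The plan is to compare $g_a$ and $g_b$ directly via the difference $h(x) := g_b(x) - g_a(x)$. Since $g_a$ and $g_b$ each satisfy \eqref{eq:gbgamma} with $\gamma = 0$ at their respective parameter values, subtracting the two ODEs and using the factorization $g_b^2 - g_a^2 = (g_a+g_b)\,h$ yields the linear, first-order equation
\begin{align*}
\tfrac{1}{2}\sigma^2(x)\,h'(x) + \Bigl[x\mu(x) - \tfrac{\eps}{2}\sigma^2(x)\bigl(g_a(x)+g_b(x)\bigr)\Bigr] h(x) = \Leps(b) - \Leps(a),
\end{align*}
valid on $(0,\infty)$. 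The critical observation is that because $x^\eps \le a < b$ and $\Leps$ is decreasing on $[x^\eps,\infty)$ by Assumption (A2), the right-hand side is strictly negative; consequently, at any point $x_1$ where $h(x_1)=0$, the ODE forces $h'(x_1) = 2(\Leps(b)-\Leps(a))/\sigma^2(x_1) < 0$. Thus $h$ can only cross the zero level from above.

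Next I would pin down the boundary values. Applying Lemma \ref{lemma:geqone} at the parameter $b > x^\eps$ furnishes some $y_2\in(0,x^\eps)$ with $g_b\ge 1$ on $(y_2,b]$; since $a\in[x^\eps,b)\subset (y_2,b]$, we obtain $h(a) = g_b(a)-1\ge 0$. Applying Lemma \ref{lemma:passboundary} at the parameter $a\ge x^\eps$ and the point $x=b>a$ gives $g_a(b)\ge 1$, hence $h(b) = 1-g_a(b)\le 0$.

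For the first claim, suppose for contradiction that $h(x_0)<0$ for some $x_0\in(0,a)$. Since $h(a)\ge 0$ and $h$ is continuous, the point $x_1 := \inf\{x\in[x_0,a]:h(x)\ge 0\}$ lies in $(x_0,a]$ and satisfies $h(x_1)=0$ with $h<0$ just to the left of $x_1$; this forces $h'(x_1)\ge 0$, contradicting the sign of $h'$ at zeros of $h$. The second claim is symmetric: if $h(x_0)>0$ for some $x_0>b$, then since $h(b)\le 0$, the point $x_1 := \sup\{x\in[b,x_0]:h(x)\le 0\}$ satisfies $h(x_1)=0$ with $h>0$ immediately to the right of $x_1$, so $h'(x_1)\ge 0$, again a contradiction. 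The main obstacle is really just isolating the algebraic rearrangement that reveals the sign of $h'$ at zeros of $h$; once that is done, the boundary data supplied by Lemmas \ref{lemma:passboundary} and \ref{lemma:geqone} combine with this sign constraint to close both directions of the inequality in a few lines each.
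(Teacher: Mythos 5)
Your proposal is correct and takes essentially the same route as the paper: both subtract the two Riccati-type ODEs for $g_a$ and $g_b$, observe that at any zero of the difference the equation forces the derivative to have the sign of $\Leps(b)-\Leps(a)<0$, and then combine this with the boundary data from Lemmas \ref{lemma:passboundary} and \ref{lemma:geqone} plus a Lemma \ref{lemma:deriv}-type crossing argument. The only (cosmetic) difference is that the paper normalizes to $G=\delta^{-1}(g_a-g_b)$ and therefore needs the strict inequality $g_b(a)>1$, which it proves by a short side argument; you work with $h=g_b-g_a$ unnormalized, which lets you get by with the weaker $h(a)\ge 0$, $h(b)\le 0$, and sidesteps that step.
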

\begin{proof}[Proof of Lemma \ref{lemma:comparison}]
Fix $x^\eps < a < b$. Set $\del := \Leps(a) - \Leps(b) > 0$ (for the inequality, recall (A2)), and define the function $G:(0,a]\to\R$ by $$G(x) := \del^{-1}(g_a(x) - g_b(x)).$$
Notice that $g_b(a) > 1$. Otherwise, by Lemma \ref{lemma:geqone} and since $a>x^\eps$, $g_b(a) = 1$. However $\sig^2(a)g_b'(a)/2 = -\del<0$, and as a consequence $g_b$ is below 1 in a right neighborhood at $a$, which contradicts the fact $x^\eps < a < b$. The function $G(x)$ satisfies,
\begin{align*}
\begin{cases}
\frac 12\sigma^2(x) G'(x)+x\mu(x) G(x)-\eps\sig^2(x)g_b(x)G(x)-\frac{\eps}{2}\del\sigma^2(x)(G(x))^2= 1,\qquad x\in(0,a],
\\G(a)=\frac{1 - g_b(a)}{\del}.
\end{cases}
\end{align*}
From the previous argument we have $G(a) < 0$. We show that for any $x \in (0,a)$ one has $G(x) < 0$. Arguing by contradiction, suppose this does not hold. Set $x_a = \sup\{x \in (0, a): G(x) = 0\}$. Then $G'(x_a) = 2/\sig^2(a) >0$, in contradiction to Lemma \ref{lemma:deriv}. Therefore, for any $x \in (0,a)$ one has $$g_a(x) = g_b(x) + \del G(x) \leq g_b(x).$$
The part of the reversed inequality for $x \in (b,\infty)$ is similar and uses Lemma \ref{lemma:passboundary} instead of Lemma \ref{lemma:geqone} to get that $G(b) > 0$, and is therefore omitted.
\end{proof}

The following lemma establishes the pointwise convergence as the boundary converges. Therefore, we can use properties of $g_{b_i}$ along a converging sequence $b_i \to b$ to establish properties of $g_b$.
\begin{lemma}\label{lemma:continuity}
For any $b > x^\eps$ and $y < b$, we have
$$|g_{b+\del}(y)-g_b(y)| \to 0\qquad\text{as}\qquad \delta\to 0.$$
Moreovoer, the above holds also for $b = x^\eps$ when $\delta\to0+$.
\end{lemma}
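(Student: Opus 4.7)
The plan is to reduce continuity in the moving boundary $b$ to two ingredients we have already assembled: continuity in the right-hand side of the ODE (Lemma \ref{lemma:perturb}) and continuity in an interior boundary value. The strategy is to first ``transport'' the boundary condition of $g_{b+\delta}$ from $b+\delta$ back to the point $b$, and then compare $g_{b+\delta}$ with $g_b$ on the common interval $[y,b]$ via a linearized Gronwall-type argument. I focus on $\delta > 0$ (the case $\delta < 0$, when allowed, is symmetric by swapping the roles of $b$ and $b+\delta$).

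For the transport step, note that on the short interval $[b, b+\delta]$ the function $g_{b+\delta}$ solves \eqref{eq:gbgamma} with boundary value $g_{b+\delta}(b+\delta)=1$. By local existence/uniqueness for the ODE, $g_{b+\delta}$ stays in a bounded neighborhood of $1$ on $[b,b+\delta]$ uniformly in small $\delta$, and hence $g_{b+\delta}'$ is bounded there as well; integrating from $b$ to $b+\delta$ yields
$$g_{b+\delta}(b) \;=\; 1 - \int_b^{b+\delta}g_{b+\delta}'(x)\,dx \;=\; 1 + O(\delta), \qquad \delta \to 0+.$$
For the comparison step, subtracting the ODEs satisfied by $g_{b+\delta}$ and $g_b$ on $(0,b]$ shows that $h_\delta := g_{b+\delta} - g_b$ is a solution of the linear first-order ODE
$$\tfrac{1}{2}\sigma^2(x) h_\delta'(x) + \bigl[x\mu(x) - \tfrac{\eps}{2}\sigma^2(x)\bigl(g_b(x)+g_{b+\delta}(x)\bigr)\bigr] h_\delta(x) = \Leps(b+\delta)-\Leps(b),$$
with $h_\delta(b) = g_{b+\delta}(b) - 1 = O(\delta)$. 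Since $\Leps$ is continuous, the right-hand side is $O(\delta)$ uniformly in $x$. Combining Lemmas \ref{lemma:geqone} and \ref{lemma:comparison} (with an auxiliary upper bound coming from the Cole--Hopf linearization preceding Lemma \ref{lemma:perturb}) provides a uniform a priori bound on $g_{b+\delta}$ on $[y,b]$ for small $\delta$, so the coefficient of $h_\delta$ is bounded on $[y,b]$. A standard Gronwall estimate run backwards from $b$ to $y$ then yields
$$\sup_{x \in [y,b]}|h_\delta(x)| = O(\delta),$$
and evaluating at $x=y$ gives the claim. When $b = x^\eps$ and $\delta \to 0+$ we stay in the regime $b+\delta > x^\eps$, so every invocation of Assumption (A2) and of the preceding lemmas remains valid.

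The main obstacle is the moving boundary: one cannot simply quote a continuous-dependence theorem for ODEs with fixed boundary data. The essential trick is the two-step decomposition above, in which the displacement of the boundary from $b+\delta$ to $b$ is handled by a short-interval integration (giving an $O(\delta)$ perturbation of the boundary value at the common point $b$) while the perturbation of the source term from $\Leps(b)$ to $\Leps(b+\delta)$ is handled by the same truncation/Lipschitz argument already used in the proof of Lemma \ref{lemma:perturb}. The remaining technicality is only to ensure that the a priori bounds on $g_{b+\delta}$ over $[y,b]$ are uniform in small $\delta$; this is where the monotonicity provided by Lemma \ref{lemma:comparison} becomes indispensable.
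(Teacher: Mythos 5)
Your proposal is correct and follows essentially the same two-step strategy as the paper's own proof: first bound $|g_{b+\delta}(b)-g_b(b)|=O(\delta)$ by integrating $g_{b+\delta}'$ over $[b,b+\delta]$ and using the ODE to control the derivative, then propagate this boundary discrepancy from $b$ down to $y$ via a Gronwall estimate on the (effectively linear) ODE for the difference, with uniform a priori bounds coming from Lemma \ref{lemma:comparison}. The only cosmetic difference is that the paper sandwiches $g_{b+\delta}$ between $g_b$ and $g_{b+1}$ on $[y,b]$ to get the uniform bound, whereas you gesture at local existence theory for the short-interval estimate, but both arrive at the same conclusion with the same machinery.
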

\begin{proof}[Proof of Lemma \ref{lemma:continuity}]
The proof in the case $\del \to 0-$ is similar to the proof in the case $\del \to 0+$. We therefore omit it.

Fix $y\in(0,b)$. By definition, for any $w \in [y,b]$ we have
\begin{align*}
g_{b+\del}(w) - g_b(w)&= g_{b+\del}(b) - g_b(b)
\\
&\quad- \int_y^{b}\frac{2}{\sig^2(x)}\Big[(\Leps(b+\delta)-\Leps(b))\\
&\qquad\qquad\qquad\quad+\Big(\frac{\eps}{2}\sig^2(x)(g_b(x)+g_{b+\del}(x))-x\mu(x)\Big)(g_{b+\del}(x)-g_b(x))\Big]dx.
\end{align*}

Without loss of generality assume $\del \in (0,1)$. By Lemma \ref{lemma:comparison}, $\min_{x\in[y,b]} g_b(x) \le g_b \leq g_{b+\del} \leq g_{b+1}\le C_0$ on $[y,b]$, for some $C_0$, independent of $\delta$.  Since $\sig$ and $\mu$ are bounded on $[y,b]$, Gronwall's inequality implies that there is a constant $C_1 >0$, such that for any $\del \in (0,1)$ we have $$|g_{b+\del}(y) - g_b(y)| \leq C_1|g_{b+\del}(b) - g_b(b)|.$$ The following sequence of relations show that the right-hand side is of order $O(\del)$. We have
\begin{align*}
| g_b(b)-g_{b+\del}(b)| &=|g_{b+\del}(b+\del) - g_{b+\del}(b)|
\\
 &\leq \del\cdot\sup_{x \in [b, b+\del]}|g_{b+\del}'(x)|
 \\
 &\leq \del\cdot\sup_{x \in [b, b+\del]}\Big|\frac{2x\mu(x)}{\sig^2(x)}+2\eps+\frac{\eps \Leps(b+\delta)}{\sig^2(x)}\Big|\cdot\sup_{x \in [b, b+\del]}|g_{b+\del}(x)|
 \\
 &\leq C_2\del,
\end{align*}
where $C_2 > 0$ is independent of $\del$. The equality follows since $g_{b+\del}(b+\del) = g_b(b) = 1$. The first inequality follows by the mean-value theorem. The second inequality follows by the ODE that $g_{b+\del}$ satisfies. Finally, the last inequality follows since all the terms involved are bounded in $[b,b+\del]$, uniformly in $\del$, where for the last term, we used that $1\le g_{b+\del} \leq g_{b+1}$, where the first inequality follows by Lemma \ref{lemma:geqone} and $b + \del > b\geq x^\eps$.
\end{proof}

Set $v^\eps$ as follows:
\begin{align}\label{eq:v_eps}
v^\eps(x)=
\begin{cases}
\int_{\beta^\eps}^xg_{\beta^\eps}(y)dy,& x\in(0,\beta^\eps],\\
x-g_{\beta^\eps}(\beta^\eps),&x\in(\beta^\eps,\iy).
\end{cases}
\end{align}
We are now ready to prove Proposition \ref{prop:HJB}.
\begin{proof}[Proof of Proposition \ref{prop:HJB}]\label{proof:prop:HJB}
Lemmas \ref{lemma:brange} and \ref{lemma:continuity} give that $x^\eps \leq \beta^\eps \leq \bar x^\eps$, and $\beta^\eps \in B^\eps$. By setting $(v^\eps)' = g_{\beta^\eps}$ on $(0,\beta^\eps]$, we get that the triplet $(\beta^\eps,\ell^\eps, v^\eps)$ satisfies the first line of \eqref{HJB_cutoff}, by definition of $g_{\beta^\eps}$ and $\beta^\eps$. For the second line of \eqref{HJB_cutoff}, we set $(v^\eps)' = 1$ on $(\beta^\eps,\iy)$, then by Assumption \ref{assumptions:main}, $\Leps$ is decreasing in $[x^\eps, \iy)$, so $\calL^\eps v^\eps(x) = \Leps(x) \leq \Leps(\beta^\eps) = \ell^\eps$, the second line is satisfied as well. The optimality of $\ell^\eps$ follows by the definition of $\beta^\eps$ as the infimum of $B^\eps$ and the fact that $\la^\eps$ decreases on $[x^\eps,\bar x^\eps]$.

\end{proof}

\subsection{Proof of Proposition \ref{prop:bounded}.}\label{proof:prop:bounded}
Notice that since $\beta^\eps \in  B^\eps$ (see the proof of Proposition \ref{prop:HJB}), then by Lemma \ref{lemma:brange}, strict inequality holds, that is, $\beta^\eps > x^\eps$. Now we aim to prove Proposition \ref{prop:bounded}, where we need to bound $\sig(x)g_b(x)$. The next lemma bounds $g_b(x)$ near 0 for $b < \beta^\eps$ first.

\begin{lemma}\label{lemma:leqone}
For any $ b \in (x^\eps, \beta^\eps)$, there exists $y_1 \in (0,x^\eps)$, such that $g_b(x) \leq 1$ for $x \in (0,y_1]$.
\end{lemma}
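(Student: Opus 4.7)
My approach to Lemma \ref{lemma:leqone} is to exploit the fact that $b < \beta^\eps = \inf B^\eps$ (so $b \notin B^\eps$) together with the rigidity of the ODE \eqref{eq:gbgamma} at level $1$ on the region $(0, x^\eps)$. The plan is first to locate a point where $g_b$ dips below $1$ strictly below $x^\eps$; second, to observe that the ODE forces every level-$1$ crossing of $g_b$ inside $(0, x^\eps)$ to be strictly transversal from below; and finally, to combine these two facts, in an argument by contradiction, to rule out any oscillation of $g_b$ around $1$ near the origin.

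Concretely, since $b \notin B^\eps$ and $g_b(b) = 1$, the definition of $B^\eps$ supplies some $x_\ast \in (0,b)$ with $g_b(x_\ast) < 1$. Because $b > x^\eps$, Lemma \ref{lemma:geqone} yields a point $y_2 \in (0, x^\eps)$ with $g_b \geq 1$ on $(y_2, b]$, which forces $x_\ast \leq y_2 < x^\eps$. Next, plugging $g_b(y) = 1$ into \eqref{eq:gbgamma} at any $y \in (0, x^\eps)$ gives
\[
\tfrac{1}{2}\sigma^2(y)\, g_b'(y) \;=\; \Leps(b) - \Leps(y) \;>\; 0,
\]
by (A2) together with $y < x^\eps < b$; hence $g_b > 1$ immediately to the right of every such crossing.

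Now suppose, for contradiction, that no $y_1$ as claimed exists, so that there is a sequence $y_n \to 0+$ with $g_b(y_n) > 1$. Fix one such $y_n$ and set $\xi := \sup\{\,x \in [y_n, x_\ast] : g_b(x) \geq 1\,\}$. The set contains $y_n$, and since $g_b(x_\ast) < 1$ continuity gives $g_b < 1$ on a left neighborhood of $x_\ast$, so $\xi \in [y_n, x_\ast)$. Continuity yields $g_b(\xi) \geq 1$, while by definition of the supremum $g_b(x) < 1$ for every $x \in (\xi, x_\ast)$. If $g_b(\xi) > 1$, continuity immediately contradicts $g_b < 1$ on $(\xi, x_\ast)$; if $g_b(\xi) = 1$, then $\xi < x^\eps$ combined with the transversality above yields $g_b'(\xi) > 0$, producing the same contradiction via $g_b > 1$ just to the right of $\xi$.

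The one subtle point, which I expect to be the main obstacle, is that the level set $\{g_b = 1\} \cap (0, x^\eps)$ could in principle accumulate at $0$, so a naive use of $\inf$ or $\sup$ over that set can fail. Anchoring the supremum $\xi$ at a specific $y_n$ close to $0$ sidesteps this issue and reduces the argument to a single pointwise application of the ODE's transversality at level $1$.
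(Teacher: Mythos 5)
Your proof is correct and follows essentially the same route as the paper's: locate a point below $x^\eps$ where $g_b$ dips strictly below $1$ (via the definition of $\beta^\eps$ and Lemma \ref{lemma:geqone}), and then use the ODE's forced positive sign of $g_b'$ at any level-$1$ crossing below $x^\eps$ (i.e., the combination of (A2) and Lemma \ref{lemma:deriv}) to rule out a return above $1$ near the origin. Your extra machinery of a sequence $y_n \to 0+$ is not needed: the paper's $x_4 := \sup\{x \in (0,y_1) : g_b(x)=1\}$ is already well-behaved, since under the contradiction hypothesis the set of crossings in $(0,y_1)$ is non-empty (intermediate value theorem) and closed with supremum strictly below $y_1$, so $g_b(x_4)=1$ and the transversality argument applies directly.
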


\begin{proof}[Proof of Lemma \ref{lemma:leqone}]
Fix $b \in (x^\eps, \beta^\eps)$. Since we take $b < \beta^\eps$, the definition of $\beta^\eps$ implies the existence of $y_1$ with $g_b(y_1) < 1$, and by Lemma \ref{lemma:geqone}, $y_1 < y_2$, where $\Leps(y_2) =\Leps(b)$. We show that for any $x \in (0, y_1)$, $g_b(x)<1$. Arguing by contradiction, assume it does not hold. Let $x_4: = \sup\{x \in (0, y_1): g_b(x) = 1\}$. Then, $$\frac 12\sigma^2(x_4) g_b'(x_4)=\Leps(b) - \Leps(x_4) > 0,$$ where inequality follows since $\Leps(b) > \Leps(x_4)$, because $x_4 < y_1 <y_2$, and by Assumption \ref{assumptions:main} (A2). This implies $g_b'(x_4) > 0$, which contradicts Lemma \ref{lemma:deriv}.
\end{proof}


In order to prove Proposition \ref{prop:bounded}, we need to analyze $h_b:= \sig g_b$. We do it for boundary points $b$'s in a left neighborhood of $\beta^\eps$, establishing a bound and an ODE for $h_b$. Eventually, the continuity for the boundary $b=\beta^\eps$ will be drived using the previous lemma \ref{lemma:continuity}.


\begin{lemma}\label{lemma:unibou}
For any $ b \in (x^\eps, \beta^\eps)$, $h_b:=\sigma g_b$ satisfies:
\begin{align*}
\begin{cases}
\frac{1}{2}\sigma(x)h_b'(x) -\frac{1}{2}\sigma'(x)h_b(x) + \frac{x\mu(x)}{\sigma(x)}h_b(x) - \frac{\eps}{2}h_b^2(x) = \Leps(b),\qquad x\in(0,b],
\\h_b(b) = \sigma(b).
\end{cases}
\end{align*}
Moreover, $h_{b}(x) \leq \sigma(\beta^\eps)$ for $x \in (0,b)$.
\end{lemma}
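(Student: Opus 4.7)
The ODE for $h_b$ follows from a direct substitution. Writing $h_b = \sigma g_b$ so that $g_b = h_b/\sigma$, the quotient rule gives $g_b' = h_b'/\sigma - \sigma' h_b/\sigma^2$, hence $\sigma^2 g_b' = \sigma h_b' - \sigma' h_b$ and $\sigma^2 g_b^2 = h_b^2$. Plugging these identities into \eqref{eq:gbgamma} (with $\gamma=0$) and regrouping yields exactly the claimed ODE for $h_b$, and the boundary condition $h_b(b) = \sigma(b)g_b(b) = \sigma(b)$ is immediate from $g_b(b)=1$.

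For the bound $h_b(x) \leq \sigma(\beta^\eps)$ on $(0,b)$, the plan is to rule out an interior excursion of $h_b$ above the level $\sigma(\beta^\eps)$ by a contradiction argument. The right endpoint is automatic: since $b < \beta^\eps$ and $\sigma$ is increasing, $h_b(b) = \sigma(b) < \sigma(\beta^\eps)$. Near the left endpoint, I would invoke Lemma \ref{lemma:leqone} (applicable precisely because $b \in (x^\eps, \beta^\eps)$) to produce $y_1 \in (0,x^\eps)$ with $g_b \leq 1$ on $(0,y_1]$, so that
\[
h_b(x) \leq \sigma(x) \leq \sigma(y_1) < \sigma(\beta^\eps), \qquad x \in (0,y_1],
\]
where the last inequality uses that $\sigma$ is increasing and $y_1 < x^\eps < \beta^\eps$. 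If instead $h_b(x_0) > \sigma(\beta^\eps)$ for some $x_0 \in (y_1,b)$, then by continuity both $y_1^\star := \sup\{y \in (y_1,x_0): h_b(y) = \sigma(\beta^\eps)\}$ and $y_2^\star := \inf\{y \in (x_0,b): h_b(y) = \sigma(\beta^\eps)\}$ exist, and Lemma \ref{lemma:deriv} gives $h_b'(y_1^\star) \geq 0$ and $h_b'(y_2^\star) \leq 0$.

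At any such crossing point, plugging $h_b = \sigma(\beta^\eps)$ into the ODE for $h_b$ reduces it to $\tfrac{1}{2}\sigma(x)h_b'(x) = F(x)$, where
\[
F(x) := \Leps(b) + \frac{\eps}{2}\sigma^2(\beta^\eps) + \sigma(\beta^\eps)\Bigl[\tfrac{1}{2}\sigma'(x) - \tfrac{x\mu(x)}{\sigma(x)}\Bigr].
\]
By Assumption (A1), $\sigma'$ is nondecreasing and $x \mapsto x\mu(x)/\sigma(x)$ is strictly decreasing, so the bracketed expression, and hence $F$, is strictly increasing. Therefore $F(y_1^\star) < F(y_2^\star)$, while the sign conditions on the two derivatives force $F(y_1^\star) \geq 0$ and $F(y_2^\star) \leq 0$; contradiction.

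The main obstacle I anticipate is that the ODE for $h_b$ becomes singular as $x \to 0+$ because $\sigma(x) \to 0$, so one cannot hope to extract sign information from it at the left endpoint directly. The role of Lemma \ref{lemma:leqone} is exactly to side-step this singularity by giving a purely algebraic bound on a fixed subinterval $(0,y_1]$; once that is done, the structural monotonicities from (A1) bundled into $F$ do the rest of the work on the compact subinterval $[y_1,b]$.
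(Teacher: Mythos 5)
Your proof is correct and follows essentially the same strategy as the paper: verify the ODE by direct substitution, control $h_b$ near $0$ via Lemma~\ref{lemma:leqone}, then rule out an interior excursion by a crossing-point argument that combines Lemma~\ref{lemma:deriv} with the monotonicity of $\sigma'$ and $x\mapsto x\mu(x)/\sigma(x)$ from (A1). The only difference is cosmetic: the paper proves the sharper bound $h_b\le\sigma(b)$ (which needs a short local argument in a left neighborhood of $b$, since $h_b(b)=\sigma(b)$ sits exactly at the level), whereas you argue directly at the level $\sigma(\beta^\eps)>\sigma(b)=h_b(b)$ and thereby sidestep that boundary case.
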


\begin{proof}[Proof of Lemma \ref{lemma:unibou}]
Fix $ b \in (x^\eps, \beta^\eps)$. From the ODE for $g_b$, it follows that $h_b$ satisfies the ODE above. Plugging in $x=b$ in the ODE above and using the boundary condition $h_b(b) = \sigma(b)$, we get by Assumption \ref{assumptions:main} (A1) that $\frac{1}{2}\sigma(b)h_b'(b) = \frac{1}{2}\sigma'(b)h_b(b) =\frac{1}{2}\sigma'(b)\sigma(b)> 0$. As a result, there exists $\delta_b > 0$ such that for $x \in [b-\delta_b, b)$, $h_b(x) \leq h_b(b) = \sigma(b)$. By Lemma \ref{lemma:leqone}, there is a point $y_1 \in (0,b)$ such that for $x \in (0, y_1]$, $h_b(x) \leq \sigma(x) \leq \sigma(b)$. We now show that for any $x \in (y_1, b-\delta_b)$ we have
$$h_b(x) \leq \sigma(b).$$
Arguing by contradiction, suppose it does not hold. Then there exists $y_3 \in (y_1, b-\delta_b)$ such that $h_b(y_3) > \sigma(b)$. Let $y_4 := \inf\{x \in (y_3, b - \delta_b): g_b(x) = 1\}$ and $y_5 := \sup\{x \in (y_1, y_3): g_b(x) = 1\} $ be the first times to the right and left of $y_3$, where $h_b$ attains the value $\sigma(b)$. We have $y_5 < y_3 < y_4$, and by Lemma \ref{lemma:deriv} we have $\frac{1}{2}\sigma(y_4)h_b'(y_4) \leq 0 \leq \frac{1}{2}\sigma(y_5)h_b'(y_5)$. However, since $x\mu(x)/\sigma(x)$ is decreasing, we have $\frac{y_5\mu(y_5)}{\sigma(y_5)}\sigma(b) > \frac{y_4\mu(y_4)}{\sigma(y_4)}\sigma(b)$, and since $\sigma'$ is nondecreasing, we have
\begin{align}\notag
\frac{1}{2}\sigma(y_5)h_b'(y_5) &= \la^\eps(b) + \frac{1}{2}\sigma'(y_5)\sigma(b)+\frac{\eps}{2}\sigma^2(b)-\frac{y_5\mu(y_5)}{\sigma(y_5)}\sigma(b) \\\notag
&< \la^\eps(b) + \frac{1}{2}\sigma'(y_4)\sigma(b)+\frac{\eps}{2}\sigma^2(b)-\frac{y_4\mu(y_4)}{\sigma(y_4)}\sigma(b) = \frac{1}{2}\sigma(y_4)h_b'(y_4),
\end{align}
a contradiction.

We have shown that for all $x \in (0,b)$ one has $h_b(x) \leq \sigma(b)$. Since $b < \beta^\eps$ and $\sigma(x)$ is increasing the proof is complete.
\end{proof}

\begin{proof}[Proof of Proposition \ref{prop:bounded}]
Fix any point $x \in (0, \beta^\eps)$ and take an increasing sequence $\{b_i\}_i \subseteq (x,\beta^\eps)$ that converges to $\beta^\eps$. From Lemma \ref{lemma:unibou} we have that for each $b_i$ it is true that $h_{b_i}(x) \leq \sigma(\beta^\eps)$. Then, by Lemma \ref{lemma:continuity} as $b_i\uparrow \beta^\eps$ we have that $h_{b_i}(x) = \sigma(x)g_{b_i}(x)$ converges to $\sigma(x)g_{\beta^\eps}(x)$. Hence, $\sigma(x)g_{\beta^\eps}(x) \leq \sigma(\beta^\eps)$ and $\sigma(\beta^\eps)g_{\beta^\eps}(\beta^\eps) = \sigma(\beta^\eps)$. Combining the above and recalling the definition of $v^\eps$ from \eqref{eq:v_eps}, we get that that for any $x\in(0,\beta^\eps]$ one has $\sigma(x)(v^\eps)'(x) \leq \sigma(\beta^\eps)$.
\end{proof}

\subsection{Proof of Proposition \ref{prop:upper}}\label{sec:5}
In this part we fix the gap from \cite{alv-hen2019}. The crux of the matter is that the function $v^\eps$ may be unbounded from below, hence $\E^{\Q^\eps}[v^\eps(X^Z_T)]$ may be equal to $-\iy$. To overcome this challenge, we work with a truncated version of a function that is associated with a threshold that is arbitrarily close to $\beta^\eps$ from below.

Fix $T ,x> 0$, $\eps\ge 0$, and $b\in[x^\eps,\beta^\eps)$. Recall the definition of the function $g_b$ from \eqref{eq:gbgamma}. Set
$\al_b := \inf\{x>0: g_b(y) \geq 1, y \in [x,b]\}$. From Lemmas \ref{lemma:passboundary} and \ref{lemma:leqone} it follows that $\alpha_b\in(0,b)$ and moreover that
$g_b(x) <1$ (resp., $g_b(x)\ge 1$) for $x \in(0,\al_b)$ (resp., $x\in[\al_b,\iy)$). Note that $(v^\eps)'(x)=g_{\beta^\eps}(x) \geq 1$ for all $x \in (0,\beta^\eps]$ (in fact, it can not equal to 1 all the way, which makes it a constant function, and then $\calL^\eps v_{\beta^\eps}(x) = \lambda^\eps(x)$, for all $x$, contradicting \eqref{HJB_cutoff}). Thus, from Lemma \ref{lemma:continuity}, we have the convergence
\begin{align}\notag
\al_b \to 0+\qquad\text{as}\qquad b \to \beta^\eps-.
\end{align}

Define the function $ v_b$ via $v_b(x)=v_b(\alpha_b)+\int_{\alpha_b}^xv'_b(y)dy$, $x\in(0,\iy)$, where,
\begin{align*}
  v'_b(x) :=
 \begin{cases}
    g_b(x), & x \in[\al_b,\iy),\\
 g_b'(\al_b)x - g_b'(\al_b)\al_b + 1, &x\in(0,\alpha_b).
    \end{cases}
\end{align*}
As mentioned above, $g_b(x)\ge 1$ for $x\in[\alpha_b,\iy)$. Together with the construction we have $v_b'(x)\ge 1$ for any $x\in(0,\iy)$. Another key relation that we show in the sequel is that $v_b$ satisfies  $\calL^\eps v_b(x) \le\ell^\eps+\delta_b$, for some $\delta_b\to 0$ as $b\to \beta^\eps-$. To make the proof more fluent, we assume for now that it holds.

Set an arbitrary admissible control $Z$ and set the stopping times $T_n := T \wedge \inf\{t\geq X^Z_t \notin[1/n,n] \}$. 
Recall the structure of the operator $\calL^\eps$ from \eqref{operator} and define the measure $\Q^{v_b}$ with the Girsanov kernel $\psi^{v_b}$, given by
\begin{align}\notag
\begin{split}
\psi^{v_b}_t&:=\argmin_{p\in\R}\Big\{
\frac 12\sigma^2(X^Z_t)v_b''(X^Z_t)+\Big(X^Z_t\mu(X^Z_t)+\sigma(X^Z_t)p\Big)v_b'(X^Z_t)+\frac 1{2\eps} p^2
\Big\}\\
&\;=-\eps\sigma(X^Z_t) v_b'(X^Z_t).
\end{split}
\end{align}
Note that for any $b>0$, the drift term under the measure $\Q^{v_b}$ is $x\mu(x)-\eps\sigma^2(x)v_b'(x)$, which behaves near $x=0+$ as $x\mu(x)-O(x^2)$ (see \eqref{dynamics:Q}). Therefore, by setting up $S_{\Q^{v_b}}$ as in \eqref{scale} with the drift and variance from \eqref{dynamics:Q}, one gets that (A0) holds under the measure $\Q^{v_b}$ as well, which means that the process $X$ does not get absorbed at $0$ in a finite time.

By It\^o's Lemma, and the definition of $\psi^{v_b}_t$,we get that
\begin{align*}
v_b(X^Z_{T_n}) &= v_b(x) + \int_0^{T_n}\mathcal L^\eps v_b(X^Z_s)ds-\int_0^{T_n}\frac{\eps}{2}\sigma^2(X^Z_s)(v_b'(X^Z_s))^2ds
+\int_0^{T_n}\sigma(X^Z_s)v_b'(X^Z_s)dW^{\Q^{v_b}}_s\\
&\quad-\int_0^{T_n}v_b'(X^Z_{s-})dZ_s+\sum_{s\leq T_n}(v_b'(X^Z_{s-})\Delta Z_s+v_b(X^Z_{s})-v_b(X^Z_{s-})).
\end{align*}
Since $v_b \in \mathcal C^2$, its derivative is bounded on $[1/n,n]$. This, together with the bounded variation of $Z$ gives us,
\begin{align*}
v_b(X^Z_{T_n}) &= v_b(x) + \int_0^{T_n}\mathcal L^\eps v_b(X^Z_s)ds
-\int_0^{T_n}\frac{\eps}{2}\sigma^2(X^Z_s)(v_b'(X^Z_s))^2ds+\int_0^{T_n}\sigma(X^Z_s)v_b'(X^Z_s)dW^{\Q^{v_b}}_s
\\
&\quad-\int_0^{T_n}v_b'(X^Z_s)dZ^c_s+\sum_{s\leq T_n}(v_b(X^Z_{s})-v_b(X^Z_{s-})),
\end{align*}
where $Z^c$ is the continuous part of $Z$ and in case that $\eps=0$, $\Q^{v_b}=\PP$. Rearranging the equation and taking expectation with respect to $\Q^{v_b}$, one gets,
\begin{align*}
&\mathbb E^{\Q^{v_b}} \Big[\int_0^{T_n}dZ_s  +  \int_0^{T_n}\frac{1}{2\eps}(\psi^{v_b}(X^Z_s))^2ds\Big]\\
&\quad= \mathbb E^{\Q^{v_b}} \Big[v_b(x) - v_b(X^Z_{T_n}) -\int_0^{T_n}[v_b'(X^Z_s)-1]dZ^c_s-\sum_{s\le T_n}\int_{X^Z_s}^{X^Z_{s-}}[v_b'(y)-1]dy+\int_0^{T_n
}\mathcal L^\eps v_b(X^Z_s)ds\Big].
\\
&\quad\le \mathbb E^{\Q^{v_b}} \Big[v_b(x) - v_b(X^Z_{T_n})\one_{\{X^Z_{T_n}\le \beta^\eps\}} -v_b(\beta^\eps)\one_{\{X^Z_{T_n}> \beta^\eps\}}\Big]+(\ell^\eps+\delta_b) T_n.
\end{align*}
The equality follows by the identity $(\psi^{v_b}(x))^2/(2\eps) = (\eps/2)\sigma^2(x)(v_b'(x))^2$; the inequality follows since $v_b'(x) \geq 1$ and by our assumption (to be proved below) that $\calL^\eps v_b(x) \le \ell^\eps+\delta_b$. Taking $n\to\iy$, then by 
the monotone convergence theorem, 
\begin{align*}
&\E^{\Q^{v_b}} \Big[\int_0^{T}dZ_s  +  \int_0^{T}\frac1{2\eps}(\psi^{v_b}(X^Z_s))^2ds\Big]\leq v_b(x) +\sup_{y\in(0,\beta^\eps]}|v_b(y)|+( \ell^\eps+\delta_b) T.
\end{align*}
Recall that given $b$, the function $v_b$ is bounded on $(0,\beta^\eps]$.
Dividing both sides by $T$ and taking $\liminf_{T\to\iy}$, one gets
\begin{align*}
\inf_{\Q\in\calQ(x)}J(x,Z,\Q)\le J(x,Z,\Q^{v_b})\le\ell^\eps+\delta_b.
\end{align*}
Finally, recall that $Z$ was an arbitrary control, so by taking supremum over $Z$ on both sides and then $b\to\beta^\eps-$, we get the result.

The rest of the proof is dedicated to showing that $\calL^\eps v_b(x)\le  \ell^\eps+\delta_b$ for $x\in(0,\iy)$. By the construction of $v_b$, $\calL^\eps v_b(x)=\ell^\eps$ for any $x\in[\al_b,\iy)$. Hence, it is only left to show that
$\calL^\eps v_b(x)\le  \ell^\eps+\delta_b$ for any $x\in(0,\alpha_b)$. From the definition of $g_b$, we have at $\alpha_b$:
\begin{align*}
    \frac 12 \sig^2(\al_b) g_b'(\al_b) + \al_b\mu(\al_b) - \frac \eps2 \sig^2(\al_b) = \lambda^\eps(b).
\end{align*}
Then,
\begin{align}\label{gprimeb}
g_b'(\alpha_b)=2(\lambda^\eps(b)-\lambda^\eps(\alpha_b))/\sigma^2(\alpha_b).
\end{align}

Let us write
\begin{align}\label{calL}
    \calL^\eps v_b(x) &=L^1(x)+L^2(x)+L^3(x)+L^4(x),
    \end{align}
    where
 \begin{align*}
 L^1(x)&:=\frac{1}{2}g_b'(\alpha_b)\bar \sig^2 x^2 + \bar\mu x(g_b'(\alpha_b)(x-\al_b) + 1) - \frac \eps2 \bar \sig^2 x^2(g_b'(\alpha_b)(x-\al_b) + 1)^2,\\
L^2(x)&:=\frac 12 (\sig^2(x) - \bar \sig^2 x^2)g_b'(\alpha_b), \\
L^3(x)&:=
 (x\mu(x) - x \bar\mu)(g_b'(\alpha_b)(x-\al_b) + 1) ,\\
L^4(x)&:=\frac \eps2 (\sig^2(x) - \bar \sig^2 x^2)(g_b'(\alpha_b)(x-\al_b) + 1)^2.
\end{align*}
The function $L^1(x)$ is a 4th order polynomial in $x$. Differentiate it:
\begin{align*}
    (L^1)'(x) &:= g_b'(\alpha_b)\bar \sig^2 x + \bar\mu(g_b'(\alpha_b)(x-\al_b) + 1)+\bar\mu xg_b'(\alpha_b) \\
    &\quad- \eps\bar \sig^2x(g_b'(\alpha_b)(x-\al_b) + 1)^2-\eps \bar \sig^2x^2 g_b'(\alpha_b)(g_b'(\alpha_b)(x-\al_b) + 1).
\end{align*}
By our assumptions on the parameters, this is a cubic polynomial with negative leading coefficient (consider its domain to be $\R$), and hence goes to $-\iy$ (resp., $+\iy$) for sufficient large (resp., small) $x$.
Now plug in $\al_b$ and $0$ to get
\begin{align*}
(L^1)'(\al_b)& = g_b'(\alpha_b)\bar \sig^2 \al_b + \bar\mu + \bar\mu g_b'(\alpha_b)\al_b - \eps\bar \sig^2\al_b - \eps \bar \sig^2g_b'(\alpha_b)\al_b^2,\\
   (L^1)'(0) &= \bar\mu(1-g_b'(\alpha_b)\al_b).
\end{align*}
By the quadratic bound for $\sigma$ in Assumption \ref{assumptions:main} (A1), there exist $c_1,c_2>0$, independent of $b$, such that $c_1\al_b^2 \leq \sig^2(\al_b) \leq c_2\al_b^2$. Moreover, $b\mapsto \la^\eps(b)-\la^\eps(\alpha_b)$ is bounded. Thus, from \eqref{gprimeb}, $g_b'(\al_b)\al_b \to \iy$ and $g_b'(\al_b)\al_b^2$ is bounded as $\al_b \to 0+$. Therefore, $(L^1)'(\al_b) > 0$ and $(L^1)'(0) < 0$ for sufficiently small $\al_b$. By some basic knowledge about the shape of cubic functions, this indicates that the function $(L^1)'$ has three zeros, one in each of the intervals: $(-\iy,0), (0,\alpha_b), (\alpha_b,\iy)$.
This means that on the interval $[0,\alpha_b]$, the function $L^1$ first decreases and then increases. That is, it obtains its maximum on this interval at one of the endpoints: $0,\alpha_b$. By substitution, we get that for any $x\in[0,\alpha_b]$, $$L^1(x)\le L^1(\alpha_b)=\la^\eps(b)-L^2(\alpha_b)-L^3(\alpha_b)-L^4(\alpha_b).$$

Again, the quadratic bounds in Assumption \ref{assumptions:main} (A1) imply that $|\sig^2(x) - \bar \sig^2 x^2| \leq 2cx^2\max\{\sig(x), \bar \sig x\}$ and for any  $x\in(0, \al_b]$,
\begin{align*}
&|L^2(x)| \leq c\al_b^2\max\{\sig(\al_b), \bar \sig \al_b\}g_b'(\alpha_b),\\
&|L^3(x)| \leq c\al_b^2\max\{1,|1-g_b'(\alpha_b)\al_b|\},\\
&|L^4(x)| \leq \eps c\al_b^2\max\{\sig(\al_b), \bar \sig \al_b\}\max\{1,|1-g_b'(\alpha_b)\al_b|\}.
\end{align*}
Since $\al_b^2/\sig^2(\al_b)$ is bounded as $\al_b \to 0+$, and $\max\{\sig(\al_b), \bar \sig \al_b\} \to 0$ as $\al_b \to 0+$, the three right-hand sides converge to 0. Finally, use the last bounds together with the continuity of $\lambda^\eps$ (at $\beta^\eps$) and \eqref{calL} to get that
$\calL^\eps v_b(x)\le \ell^\eps+\delta_b$, where $\delta_b\to0$ as $b\to\beta^\eps-$.

\qed
\section{Comparative statics}\label{sec:6}
In this section we analyze the monotonicity of the parameters $\ell^\eps$ and $\beta^\eps$ with respect to $\eps$ and their limiting behavior as $\eps\to0+$ and $\eps\to\iy$. In this way, we show the convergence of our model to the risk-neutral model studied by \cite{alv-hen2019}. Recall that at least for the Verhulst--Pearl model given in Example \ref{ex:VP}, Assumptions (A0)--(A2) hold for any $\eps\ge 0$, and therefore, in this example Theorem \ref{thm:main} is valid on the entire region $[0,\iy)$ for $\eps$. 
%

\begin{theorem}\label{thm:cont}
The mapping $[0,\infty) \ni \eps \mapsto \beta^\eps$ is non-increasing and and $\lim_{\eps \to 0+}\beta^\eps = \beta^0$ and $\lim_{\eps \to \infty}\beta^\eps = 0$.
\end{theorem}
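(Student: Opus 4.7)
The plan is to establish the three assertions in turn, using the ODE framework developed in Section \ref{sec:4}.

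For the monotonicity, I would fix $\eps_1<\eps_2$, set $b:=\beta^{\eps_1}$, and show $b\in B^{\eps_2}$, which yields $\beta^{\eps_2}\le\beta^{\eps_1}$. Letting $G:=g_b^{\eps_2}-g_b^{\eps_1}$ and subtracting the two ODEs \eqref{eq:gbgamma} (with $\gamma=0$, for parameters $\eps_1$ and $\eps_2$) gives the linear equation
\begin{align*}
\tfrac12\sigma^2(x)G'(x)+\Bigl[x\mu(x)-\tfrac{\eps_2}{2}\sigma^2(x)\bigl(g_b^{\eps_1}+g_b^{\eps_2}\bigr)(x)\Bigr]G(x)=\tfrac{\eps_2-\eps_1}{2}\Bigl[\sigma^2(x)\bigl(g_b^{\eps_1}(x)\bigr)^2-\sigma^2(b)\Bigr].
\end{align*}
By Proposition \ref{prop:bounded} applied to $\eps_1$, $\sigma(x)g_b^{\eps_1}(x)\le\sigma(b)$ on $(0,b]$, so the right-hand side (source) is nonpositive and vanishes at $x=b$. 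Using $(g_b^{\eps})'(b)=0$, direct computation yields $G(b)=G'(b)=0$ and $G''(b)=2(\eps_2-\eps_1)\sigma'(b)/\sigma(b)>0$, so $G>0$ in a left neighborhood of $b$. If $G(x_0)<0$ for some $x_0\in(0,b)$, set $\tilde x:=\sup\{x\in(0,b):G(x)\le 0\}\ge x_0>0$, so that $G(\tilde x)=0$ and $G>0$ on $(\tilde x,b)$. On the compact interval $[\tilde x,b]$ the coefficients of the linear ODE are bounded, so the variation-of-constants formula with $G(\tilde x)=0$ and nonpositive forcing yields $G\le 0$ on $[\tilde x,b]$, contradicting $G>0$ on $(\tilde x,b)$. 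Hence $g_b^{\eps_2}\ge g_b^{\eps_1}\ge 1$ on $(0,b]$.

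For the limit $\eps\to 0+$, monotonicity already gives $\beta^\star:=\lim_{\eps\to 0+}\beta^\eps\le\beta^0$. For the reverse inequality, I would invoke continuous dependence of \eqref{eq:gbgamma} on $(\eps,b)$, in the spirit of Lemmas \ref{lemma:perturb} and \ref{lemma:continuity}: for any $y\in(0,\beta^\star)$, $g_{\beta^\eps}^\eps\to g_{\beta^\star}^0$ uniformly on $[y,\beta^\star]$. Passing to the limit in $g_{\beta^\eps}^\eps\ge 1$ on $(0,\beta^\eps]$ then yields $g_{\beta^\star}^0\ge 1$ on $(0,\beta^\star]$, so $\beta^\star\in B^0$ and $\beta^0\le\beta^\star$.

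For the limit $\eps\to\infty$, Lemma \ref{lemma:brange} gives $\beta^\eps\le\bar x^\eps$, so it suffices to prove $\bar x^\eps\to 0$. Fix any $\delta>0$. Since $\sigma(\delta)>0$, $\Leps(\delta)=\delta\mu(\delta)-\tfrac{\eps}{2}\sigma^2(\delta)\to-\infty$ as $\eps\to\infty$. The first-order condition $\mu(x^\eps)+x^\eps\mu'(x^\eps)=\eps\sigma(x^\eps)\sigma'(x^\eps)$ together with Assumption (A1) forces $x^\eps\to 0$: otherwise the right-hand side would be unbounded while the left-hand side remains bounded on any compact set bounded away from $0$. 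Hence for all large $\eps$, $x^\eps<\delta$ and $\Leps(\delta)<0=\Leps(\bar x^\eps)$, so by the monotonicity of $\Leps$ on $(x^\eps,\infty)$ from (A2), $\bar x^\eps\le\delta$. The main technical point is the comparison step for monotonicity, where the singularity of \eqref{eq:gbgamma} at $x=0+$ must be sidestepped; restricting the first-crossing analysis to $[\tilde x,b]$ with $\tilde x>0$ resolves this issue.
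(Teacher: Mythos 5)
Your proof is correct and its architecture matches the paper's: reduce monotonicity to showing $\beta^{\eps_1}\in B^{\eps_2}$ for $\eps_1<\eps_2$ by comparing the two $g_b$ solutions via the ODE they satisfy, with Proposition~\ref{prop:bounded} supplying the sign of the forcing term; then squeeze $\lim_{\eps\to0+}\beta^\eps$ between $\beta^0$ (from monotonicity) and $\beta^0$ (from continuous dependence of the ODE on $\eps$); then use $\beta^\eps\le\bar x^\eps\to0$. The technical execution differs at two points. For monotonicity, the paper introduces a $\gamma$-perturbed solution $f_{1,\gam}$ so that the derivative $\varphi'(\beta^{\eps_2})=-\gam$ is strictly negative at the boundary, and closes the first-crossing argument with Lemma~\ref{lemma:deriv}; you instead work at $\gamma=0$, observe the first-order degeneracy $G(b)=G'(b)=0$, compute the second-order sign $G''(b)=2(\eps_2-\eps_1)\sigma'(b)/\sigma(b)>0$ to get $G>0$ near $b$, and close the crossing argument with variation of constants on $[\tilde x,b]$. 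Both are valid; your route avoids the $\gamma$-perturbation at the cost of an explicit second-derivative computation, whereas the paper's $\gamma$-trick keeps everything first order at the price of a $\gamma\to0$ limit (via Lemma~\ref{lemma:perturb}). For $\eps\to\iy$, the paper asserts $\bar x^\eps\to0$ from $\Leps(\delta)<0$ for large $\eps$ without addressing the case $\delta<x^\eps$ (in which $\Leps(\delta)<0$ is consistent with $\bar x^\eps>\delta$); your extra step proving $x^\eps\to0$ via the first-order condition patches that and is a genuine tightening, though it tacitly assumes $\Leps$ is differentiable at $x^\eps$ with an interior critical point, which (A2) does not literally guarantee. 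The one place you would need to flesh things out to be fully rigorous is the continuous dependence of $g_{\beta^\eps}^\eps$ on $\eps$: Lemmas~\ref{lemma:perturb} and~\ref{lemma:continuity} handle perturbations in $\gamma$ and in $b$ respectively but not in $\eps$, so an explicit Gronwall estimate (as the paper actually writes out, combining the $\eps$- and $b$-dependence and using the $\eps$-uniform bound $\sig(v^\eps)'\le\sig(\beta^0)$ from Proposition~\ref{prop:bounded}) is required rather than a bare ``in the spirit of''.
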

\begin{proof}[Proof of Theorem \ref{thm:cont}]

We first prove monotonicity. Fix $0 \leq \eps_2 < \eps_1$. For every $\gam > 0$, let $f_{1,\gam}$ be the solution of the following ODE:
\begin{align*}
\begin{cases}
\frac 12\sigma^2(x) f_{1,\gam}'(x)+x\mu(x) f_{1,\gam}(x)-\frac{\eps_1}{2}\sigma^2(x)f_{1,\gam}^2(x)=\la^{\eps_1}(\beta^{\eps_2})-\gamma,\qquad x\in(0,\beta^{\eps_2}],
\\
f_{1,\gam}(\beta^{\eps_2})=1.
\end{cases}
\end{align*}
Set $f_2 = (v^{\eps_2})'=g_{\beta^\eps}$ (see \eqref{eq:v_eps}) 
\begin{align*}
\begin{cases}
\frac 12\sigma^2(x) \varphi '(x)+x\mu(x) \varphi (x)+\frac{\eps_2 - \eps_1}{2}\sigma^2(x)f_2^2(x) \\
\qquad\qquad- \eps_1\sig^2(x)f_2(x)\varphi (x) - \frac{\eps_1}{2}\sig^2(x)\varphi ^2(x)=\la^{\eps_1}(\beta^{\eps_2}) - \ell^{\eps_2}-\gamma,\qquad x\in(0,\beta^{\eps_2}],
\\
\varphi (\beta^{\eps_2})=0.
\end{cases}
\end{align*}
The identities $f_2(\beta^{\eps_2}) = 1$, $\varphi(\beta^{\eps_2}) = 0$, and $\la^{\eps_1}(\beta^{\eps_2}) - \ell^{\eps_2} = \frac{\eps_2 - \eps_1}{2}\sigma^2(\beta^{\eps_2})f_2^2(\beta^{\eps_2})$ give that $\frac 12 \sig^2(\beta^{\eps_2})\varphi'(\beta^{\eps_2}) = -\gam < 0$. We show that for any $x \in (0, \beta^{\eps_2})$ we have $\varphi(x) > 0$. Arguing by contradiction, suppose it does not hold. Set $x_6 := \sup\{x \in (0, \beta^{\eps_2}) : \varphi(x) = 0\}$. Then,
\begin{align*}
   \frac 12 \sig^2(x_6)\varphi'(x_6) &= \la^{\eps_1}(\beta^{\eps_2}) - \ell^{\eps_2} - \frac{\eps_2 - \eps_1}{2}\sigma^2(x_6)f_2^2(x_6) -\gamma \\&= \frac{\eps_2 - \eps_1}{2}(\sigma^2(\beta^{\eps_2}) - \sigma^2(x_6)f_2^2(x_6)) - \gam \\&<0,
\end{align*}
where the inequality follows since by Proposition \ref{prop:bounded}, $\sigma(x_6)f_2(x_6) \leq \sig(\beta^{\eps_2})$, and $\eps_2 < \eps_1$. But this contradicts Lemma \ref{lemma:deriv}. Therefore $\varphi > 0$ on $(0,\beta^{\eps_2})$.

The last conclusion together with $f_2 = (v^{\eps_2})'$ gives $f_{1,\gam}(x) \geq f_2(x) \geq 1$ for $x \in (0,\beta^{\eps_2})$. Since $\gam > 0$ is arbitrary, taking $\gam \to 0+$ and using Lemma \ref{lemma:perturb} in case $\gam = 0$, we have $f_{1,\gam}(x) \geq 1$ for $x \in (0,\beta^{\eps_2})$. This together with the definition of $\beta^{\eps_1}$ implies $\beta^{\eps_1} \leq \beta^{\eps_2}$.

We now turn to prove continuity at $\eps = 0$. First notice the limit of $\beta^\eps$ as $\eps \to 0+$ exists. It simply follows since $\beta^\eps$ is increasing and bounded above by $\beta^0$. We denote the limit by $\hat \beta$. Trivially, we have $\hat \beta \leq \beta^0$, so it is sufficient to show $\hat \beta \geq \beta^0$. For this, let $\hat g$ be the solution to the ODE
\begin{align*}
\begin{cases}
\frac 12\sigma^2(x) \hat g'(x)+x\mu(x) \hat g(x)=\la^{0}(\hat \beta),\qquad x\in(0,\hat \beta],
\\
\hat g(\hat \beta)=1.
\end{cases}
\end{align*}
For any $y \in (0, \hat \beta)$ choose $\eps$ sufficiently close to $0$ such that $\beta^\eps > y$. For any $w \in [y, \beta^\eps]$ we have

\begin{align*}
  &  (v^\eps)'(w) - \hat g(w)
\\&\;= (v^\eps)'(\beta^\eps) - \hat g(\beta^\eps)  - \int_w^{\beta^\eps} \frac{2}{\sig^2(x)}((\la^{0}(\hat \beta) - \ell^\eps) + x\mu(x)((v^\eps)'(x)-\hat g(x))-\frac \eps2 (\sig(x)(v^\eps)'(x))^2)dx.
\end{align*}
Note that $\sig$, $\mu$, and $\sig(v^\eps)'$ are bounded on $[y,\beta^\eps]$, and the bounds can be made independent of $\eps$, because $\beta^\eps \leq \beta^0$. Without loss of generality we can also make $\eps$ bounded by $1$ since $\eps \to 0+$. Gronwall's inequality implies that there is a constant $C_3 > 0$ independent of $\eps$ and $y$, such that $$|(v^\eps)'(y) - \hat g(y)| \leq C_3 |(v^\eps)'(\beta^\eps) - \hat g(\beta^\eps)| = C_3 |1 - \hat g(\beta^\eps)| = C_3 |\hat g(\hat \beta) - \hat g(\beta^\eps)|.$$ This term converges to $0$ since $\beta^\eps \to \hat \beta$. This implies $\hat g(y) \geq 1$, because $(v^\eps)'(y) \geq 1$. Since $y$ is arbitrary, it follows that for all $y \in (0, \hat \beta]$ we have $\hat g(y) \geq 1$. This in turn implies $\hat \beta \geq \beta^0$.

Finally, $0\le \lim_{\eps\to\infty}\beta^\eps\le \lim_{\eps\to\infty}\bar x^\eps=0$.
\end{proof}

\begin{theorem}\label{thm:valuecont}
The mapping $[0, \infty) \ni \eps \mapsto \ell^\eps$ is non-increasing and $\lim_{\eps \to 0+}\ell^\eps = \ell^0$ and $\lim_{\eps \to \infty}\ell^\eps = 0$.
\end{theorem}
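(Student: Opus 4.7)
My plan is to deduce all three assertions from the identity $\ell^\eps = V^\eps(x) = \la^\eps(\beta^\eps)$ provided by Theorem \ref{thm:main} together with the asymptotics $\beta^\eps \to \beta^0$ as $\eps \to 0+$ and $\beta^\eps \to 0$ as $\eps \to \iy$ established in Theorem \ref{thm:cont}. No new analytical machinery should be required; the argument is essentially an assembly of facts already proved.

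For monotonicity I would argue directly from the game structure. For $0 < \eps_1 < \eps_2$ and any admissible pair $(Z, \Q) \in \calZ(x) \times \calQ(x)$, the nonnegativity of $D^{\text{KL}}_T(\Q\|\PP)$ together with $1/\eps_2 < 1/\eps_1$ yields the pointwise (in $T$) bound
\[
\tfrac{1}{T}\bigl(\E^\Q[Z_T] + \tfrac{1}{\eps_2} D^{\text{KL}}_T(\Q\|\PP)\bigr) \le \tfrac{1}{T}\bigl(\E^\Q[Z_T] + \tfrac{1}{\eps_1} D^{\text{KL}}_T(\Q\|\PP)\bigr),
\]
so $J^{\eps_2}(x,Z,\Q) \le J^{\eps_1}(x,Z,\Q)$; taking $\inf_\Q$, then $\sup_Z$, and invoking Theorem \ref{thm:main}, this yields $\ell^{\eps_2} \le \ell^{\eps_1}$. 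To extend to the boundary case $\eps_1 = 0$, I would use that evaluating $J^{\eps_2}$ at $\Q = \PP$ (corresponding to Girsanov kernel $\psi \equiv 0$) reproduces $J^0$, so $\inf_\Q J^{\eps_2}(x,Z,\Q) \le J^0(x,Z)$ for every admissible $Z$, and then sup over $Z$.

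The convergence $\lim_{\eps \to 0+}\ell^\eps = \ell^0$ is a straightforward continuity calculation based on the explicit formula $\ell^\eps = \beta^\eps\mu(\beta^\eps) - (\eps/2)\sigma^2(\beta^\eps)$ coming from \eqref{l_eps_1}--\eqref{l_eps_2}. Theorem \ref{thm:cont} gives $\beta^\eps \to \beta^0$; since $\beta^\eps \le \beta^0$ is uniformly bounded, continuity of $x\mu(x)$ yields $\beta^\eps\mu(\beta^\eps) \to \beta^0\mu(\beta^0)$, while boundedness of $\sigma^2(\beta^\eps)$ and $\eps \to 0$ drive the second term to zero, producing $\la^0(\beta^0) = \ell^0$. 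The limit $\lim_{\eps\to\iy}\ell^\eps = 0$ is handled by squeezing: the game-theoretic lower bound $\ell^\eps \ge 0$ follows from taking $Z \equiv 0$ and $\Q = \PP$ (so $J^\eps(x,0,\PP) = 0$), while the upper bound $\ell^\eps \le \beta^\eps\mu(\beta^\eps)$ tends to $0$ because Theorem \ref{thm:cont} gives $\beta^\eps \to 0$ and $\mu$ is bounded near $0+$ by Assumption (A1).

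I do not anticipate a serious obstacle, since the hard technical input (convergence of the free boundary $\beta^\eps$) is already available from Theorem \ref{thm:cont}. The only point meriting a moment of care is the hybrid monotonicity step joining $\eps = 0$ with $\eps > 0$, which is resolved by the Girsanov identification $J^\eps(x,Z,\PP) = J^0(x,Z)$ noted above.
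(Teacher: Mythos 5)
Your proof is correct and covers all three assertions. The limits as $\eps \to 0+$ and $\eps \to \iy$ are argued exactly as in the paper, via the explicit identity $\ell^\eps = \Leps(\beta^\eps)$ and the boundary asymptotics from Theorem~\ref{thm:cont}; the nonnegativity $\ell^\eps \ge 0$ (which the paper leaves implicit, since $\beta^\eps < \bar x^\eps$ gives $\Leps(\beta^\eps) > 0$) is made explicit in your squeeze, which is a small improvement.

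The one place where your route genuinely diverges is the monotonicity step. The paper fixes $0 < \eps_2 < \eps_1$, selects a $\delta$-optimal measure $\Q^{Z,\eps_2}$ for the inner infimum at level $\eps_2$, decomposes $J^{\eps_1}(x,Z,\Q^{Z,\eps_2}) = J^{\eps_2}(x,Z,\Q^{Z,\eps_2}) + \tfrac12\bigl(\tfrac1{\eps_1}-\tfrac1{\eps_2}\bigr)\liminf_T \tfrac1T\E^{\Q^{\eps_2}}\bigl[\int_0^T(\psi^{Z,\eps_2}_t)^2\,dt\bigr]$, uses that the correction term is nonpositive, and sends $\delta\to 0+$. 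Your argument avoids the $\delta$-optimal control entirely: since $D^{\text{KL}}_T(\Q\|\PP)\ge 0$, the map $\eps\mapsto \tfrac1T\bigl(\E^\Q[Z_T]+\tfrac1\eps D^{\text{KL}}_T(\Q\|\PP)\bigr)$ is pointwise nonincreasing in $\eps$ for each fixed $(T,Z,\Q)$, and this order passes through $\liminf_T$, then $\inf_\Q$, then $\sup_Z$. That is strictly simpler and equally rigorous (you correctly note the small wrinkle at the boundary $\eps=0$, handled by substituting $\Q=\PP$ so that the Girsanov kernel vanishes). What the paper's version buys, which yours does not, is an explicit representation of the gap $\ell^{\eps_2}-\ell^{\eps_1}$ in terms of the averaged squared kernel --- potentially useful for quantitative comparative statics --- but for the stated qualitative claim your argument is preferable.
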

\begin{proof}[Proof of Theorem \ref{thm:valuecont}]
Fix $0 < \eps_2 <\eps_1$ and $\delta>0$. For any admissible control $Z\in\calZ(x)$ for $\eps_2$ consider a $\delta$-optimal control $\Q^{Z,\eps_2}\in\calQ(x)$, so that $\sup_{Z \in \calZ(x)} J^{\eps_2}(x, Z, \Q^{Z,\eps_2})\le \ell^{\eps_2}+\delta$.
Let $\psi^{Z,\eps_2}$ be its Girsanov's kernel. Then,
%
\begin{align*}
\ell^{\eps_1} &= \sup_{Z \in \calZ(x)} \inf_{\Q \in \calQ(x)} J^{\eps_1}(x, Z, \Q) \\&\leq \sup_{Z \in \calZ(x)} J^{\eps_1}(x, Z, \Q^{Z,\eps_2}) \\&= \sup_{Z \in \calZ(x)} \Big[J^{\eps_2}(x, Z, \Q^{Z,\eps_2}) + \frac 12 \Big(\frac {1}{\eps_1} - \frac {1}{\eps_2}\Big)\liminf_{T \to \infty} \frac 1T \E^{\Q^{\eps_2}}\Big[\int_0^T (\psi^{Z,\eps_2}_t)^2dt\Big]\Big] \\&< \sup_{Z \in \calZ(x)} J^{\eps_2}(x, Z, \Q^{Z,\eps_2}) \\&= \ell^{\eps_2}+\delta.
\end{align*}
%
Sending $\delta\to0+$, this establishes the monotonicity 
of $\eps \mapsto \ell^\eps$ for $\eps > 0$. The monotonicity at $\eps = 0$ follows by
\begin{align*}
    \ell^{\eps_1} &= \sup_{Z \in \calZ(x)} \inf_{\Q \in \calQ(x)} J^{\eps_1}(x, Z, \Q) \\&\leq \sup_{Z \in \calZ(x)} J^{\eps_1}(x, Z, \PP) \\&= \sup_{Z \in \calZ(x)} \liminf_{T \to \infty} \frac 1T \E^{\PP}\Big[\int_0^T dZ_t\Big] \\&= \ell^0.
\end{align*}
We now turn to proving the continuity at $\eps = 0$. We have
\begin{align*}
    |\ell^\eps - \ell^0| &= |\Leps(\beta^\eps) - L^0(\beta^0)| \\
    &\leq |\beta^\eps \mu(\beta^\eps) - \beta^0 \mu(\beta^0)| + \frac \eps2|\sig^2(\beta^\eps)| \\
    &\leq |\beta^\eps \mu(\beta^\eps) - \beta^0 \mu(\beta^0)| + \frac \eps2|\sig^2(\beta^0)|,
\end{align*}
where the first inequality follows by the triangle inequality, and the second inequality follows since $\sig$ is increasing and $\beta^\eps \leq \beta^0$. Since $\beta^\eps$ is continuous at $\eps = 0$ and $x \mapsto x\mu(x)$ is continuous, we have that
\[
|\beta^\eps \mu(\beta^\eps) - \beta^0 \mu(\beta^0)| + \frac \eps2|\sig^2(\beta^0)|\to 0\qquad\text{as}\qquad\eps\to 0+.
\]

We now turn to proving that $\lim_{\eps \to \infty}\ell^\eps = 0$. For this, first note that $\bar x^\eps \to 0$ as $\eps \to \infty$ because for any fixed $x > 0$, for sufficiently large $\eps$, $\Leps(x) = x\mu(x) - \frac \eps2 \sig^2(x) < 0$. As a consequence we get that $\beta^\eps$, which is bounded above by $\bar x^\eps$, also converges to $0$ as $\eps \to \infty$. Finally $\ell^\eps = \Leps(\beta^\eps) \leq \beta^\eps\mu(\beta^\eps) \to 0$ as $\eps\to\iy$.
\end{proof}

\footnotesize
\bibliographystyle{abbrv}
\bibliography{refs(revised)}

\begin{thebibliography}{10}

\bibitem{A00}
L.~H. Alvarez.
\newblock On the option interpretation of rational harvesting planning.
\newblock {\em J. Math. Biol.}, 40(5):383--405, 2000.

\bibitem{A01}
L.~H. Alvarez.
\newblock Singular stochastic control, linear diffusions, and optimal stopping:
  A class of solvable problems.
\newblock {\em SIAM J. Control Optim.}, 39(6):1697--1710, 2001.

\bibitem{ALO16}
L.~H. Alvarez, E.~Lungu, and B.~{\O}ksendal.
\newblock Optimal multi-dimensional stochastic harvesting with
  density-dependent prices.
\newblock {\em Afr. Mat.}, 27(3-4):427--442, 2016.

\bibitem{AS98}
L.~H. Alvarez and L.~A. Shepp.
\newblock Optimal harvesting of stochastically fluctuating populations.
\newblock {\em J. Math. Biol.}, 37(2):155--177, 1998.

\bibitem{alv-hen2019}
L.~H. Alvarez~E. and A.~Hening.
\newblock Optimal sustainable harvesting of populations in random environments.
\newblock {\em Stochastic Process. Appl.}, 2019.

\bibitem{BR2006}
K.~R. Amarjit~Budhiraja.
\newblock Existence of optimal controls for singular control problems with
  state constraints.
\newblock {\em Ann. Appl. Probab.}, 16, 2006.

\bibitem{ara1993}
A.~Arapostathis, V.~S. Borkar, E.~Fern\'{a}ndez-Gaucherand, M.~K. Ghosh, and
  S.~I. Marcus.
\newblock Discrete-time controlled {M}arkov processes with average cost
  criterion: a survey.
\newblock {\em SIAM J. Control Optim.}, 31(2):282--344, 1993.

\bibitem{ABbook}
A.~Arapostathis, V.~S. Borkar, and M.~K. Ghosh.
\newblock {\em Ergodic control of diffusion processes}, volume 143 of {\em
  Encyclopedia of Mathematics and its Applications}.
\newblock Cambridge University Press, Cambridge, 2012.

\bibitem{Atar-Budh-Will-07}
R.~Atar, A.~Budhiraja, and R.~J. Williams.
\newblock H{JB} equations for certain singularly controlled diffusions.
\newblock {\em Ann. Appl. Probab.}, 17(5-6):1745--1776, 2007.

\bibitem{BCP17}
E.~Bayraktar, A.~Cosso, and H.~Pham.
\newblock Ergodicity of robust switching control and nonlinear system of
  quasi-variational inequalities.
\newblock {\em SIAM J. Control Optim.}, 55(3):1915--1953, 2017.

\bibitem{bay-zha}
E.~Bayraktar and Y.~Zhang.
\newblock Minimizing the probability of lifetime ruin under ambiguity aversion.
\newblock {\em SIAM J. Control Optim.}, 53(1):58--90, 2015.

\bibitem{blanchet2014robust}
J.~Blanchet, C.~Dolan, and H.~Lam.
\newblock Robust rare-event performance analysis with natural non-convex
  constraints.
\newblock In {\em Proceedings of the 2014 Winter Simulation Conference}, pages
  595--603. IEEE Press, 2014.

\bibitem{BS12}
A.~N. Borodin and P.~Salminen.
\newblock {\em Handbook of Brownian motion-facts and formulae}.
\newblock Birkh{\"a}user, 2012.

\bibitem{Cohen2019b}
A.~Cohen.
\newblock Asymptotic analysis of a multiclass queueing control problem under
  heavy traffic with model uncertainty.
\newblock {\em Stoch. Syst.}, 9(4):359--391, 2019.

\bibitem{Cohen2019a}
A.~Cohen.
\newblock Brownian control problems for a multiclass {M}/{M}/1 queueing problem
  with model uncertainty.
\newblock {\em Math. Oper. Res.}, 44(2):739--766, 2019.

\bibitem{CoTime}
A.~Cohen.
\newblock On singular control problems, the time-stretching method, and the
  weak-{M}1 topology.
\newblock {\em SIAM J. Control Optim.}, 59(1):50--77, 2021.

\bibitem{Cohen2020}
A.~Cohen and S.~Saha.
\newblock Asymptotic optimality of the generalized c{$\mu$} rule under model
  uncertainty.
\newblock {\em Stochastic Process. Appl.}, 136:206--236, 2021.

\bibitem{han-sar}
L.~P. Hansen and T.~J. Sargent.
\newblock {\em Robustness}.
\newblock Princeton University Press, Princeton, NJ, 2008.

\bibitem{Hansen2006}
L.~P. Hansen, T.~J. Sargent, G.~Turmuhambetova, and N.~Williams.
\newblock Robust control and model misspecification.
\newblock {\em J. Econom. Theory}, 128(1):45--90, 2006.

\bibitem{HNUK19}
A.~Hening, D.~H. Nguyen, S.~C. Ungureanu, and T.~K. Wong.
\newblock Asymptotic harvesting of populations in random environments.
\newblock {\em J. Math. Biol.}, 78(1-2):293--329, 2019.

\bibitem{HT20}
A.~Hening and K.~Q. Tran.
\newblock Harvesting and seeding of stochastic populations: analysis and
  numerical approximation.
\newblock {\em J. Math. Biol.}, pages 1--48, 2020.

\bibitem{HT19}
A.~Hening, K.~Q. Tran, T.~T. Phan, and G.~Yin.
\newblock Harvesting of interacting stochastic populations.
\newblock {\em J. Math. Biol.}, 79(2):533--570, 2019.

\bibitem{JZ06}
A.~Jack and M.~Zervos.
\newblock A singular control problem with an expected and a pathwise ergodic
  performance criterion.
\newblock {\em J. Appl. Math. Stoch. Anal.}, pages Art. ID 82538, 19, 2006.

\bibitem{Shanti}
A.~Jain, A.~E.~B. Lim, and J.~G. Shanthikumar.
\newblock On the optimality of threshold control in queues with model
  uncertainty.
\newblock {\em Queueing Syst.}, 65(2):157--174, 2010.

\bibitem{KT81}
S.~Karlin and H.~E. Taylor.
\newblock {\em A second course in stochastic processes}.
\newblock Elsevier, 1981.

\bibitem{Kruk2007}
L.~Kruk, J.~Lehoczky, K.~Ramanan, and S.~Shreve.
\newblock An explicit formula for the {S}korokhod map on {$[0,a]$}.
\newblock {\em Ann. Probab.}, 35(5):1740--1768, 2007.

\bibitem{kus2001}
H.~J. Kushner.
\newblock {\em Heavy traffic analysis of controlled queueing and communication
  networks}, volume~47 of {\em Applications of Mathematics (New York)}.
\newblock Springer-Verlag, New York, 2001.
\newblock Stochastic Modelling and Applied Probability.

\bibitem{MR3544795}
H.~Lam.
\newblock Robust sensitivity analysis for stochastic systems.
\newblock {\em Math. Oper. Res.}, 41(4):1248--1275, 2016.

\bibitem{lia-zer2020}
G.~{Liang} and M.~{Zervos}.
\newblock {Ergodic singular stochastic control motivated by the optimal
  sustainable exploitation of an ecosystem}.
\newblock {\em arXiv e-prints}, page arXiv:2008.05576, Aug. 2020.

\bibitem{LO01}
E.~Lungu et~al.
\newblock Optimal harvesting from interacting populations in a stochastic
  environment.
\newblock {\em Bernoulli}, 7(3):527--539, 2001.

\bibitem{LO97}
E.~Lungu and B.~{\O}ksendal.
\newblock Optimal harvesting from a population in a stochastic crowded
  environment.
\newblock {\em Math. Biosci.}, 145(1):47--75, 1997.

\bibitem{maenhout2004robust}
P.~J. Maenhout.
\newblock Robust portfolio rules and asset pricing.
\newblock {\em Rev. Financ. Stud.}, 17(4):951--983, 2004.

\bibitem{men-rob-tak1992}
J.-L. Menaldi, M.~Robin, and M.~I. Taksar.
\newblock Singular ergodic control for multidimensional {G}aussian processes.
\newblock {\em Math. Control Signals Systems}, 5(1):93--114, 1992.

\bibitem{neu-nut2018}
A.~Neufeld and M.~Nutz.
\newblock Robust utility maximization with {L}\'{e}vy processes.
\newblock {\em Math. Finance}, 28(1):82--105, 2018.

\bibitem{SSZ11}
Q.~Song, R.~H. Stockbridge, and C.~Zhu.
\newblock On optimal harvesting problems in random environments.
\newblock {\em SIAM J. Control Optim.}, 49(2):859--889, 2011.

\bibitem{Stoer1980}
J.~Stoer and R.~Bulirsch.
\newblock {\em Introduction to numerical analysis}.
\newblock Springer-Verlag, New York-Heidelberg, 1980.
\newblock Translated from the German by R. Bartels, W. Gautschi and C.
  Witzgall.

\bibitem{VX10}
G.~Vardas and A.~Xepapadeas.
\newblock Model uncertainty, ambiguity and the precautionary principle:
  implications for biodiversity management.
\newblock {\em Environmental and Resource Economics}, 45(3):379--404, 2010.

\bibitem{DifEqn}
F.~Verhulst.
\newblock {\em Nonlinear Differential Equations and Dynamical Systems}.
\newblock Universitext. Springer, Berlin, Heidelberg, 1990.

\end{thebibliography}
\end{document}